\let\OLDthebibliography\thebibliography
\renewcommand\thebibliography[1]{
  \OLDthebibliography{#1}
  \setlength{\parskip}{1pt}
  \setlength{\itemsep}{1pt plus 0.3ex}
}
\newtheorem{theorem}{Theorem}
\newtheorem{cor}{Corollary}
\newtheorem{prop}{Proposition}
\theoremstyle{definition}
\newtheorem{example}{Example}
\theoremstyle{remark}
\begin{document}
\title{Multi-switches and representations of braid groups}

\author{Valeriy Bardakov,~Timur Nasybullov}
\date{}
\begin{abstract}
In the paper we introduce the notion of a (virtual) multi-switch which generalizes the notion of a (virtual) switch. Using (virtual) multi-switches we introduce a general approach how to construct representations of (virtual) braid groups by automorphisms of algebraic systems. As a corollary we introduce new representations of virtual braid groups which generalize several previously known representations.

~\\
\noindent\emph{Keywords: braid group, virtual braid group, quandle, representation by automorphisms, linear representation.} \\
~\\
\noindent\emph{Mathematics Subject Classification: 20F36, 20F29, 20N02, 16T25.}
\end{abstract}
\maketitle
\section{Introduction}
\textit{A set-theoretical solution of the Yang-Baxter equation} is a pair $(X, S)$, where $X$ is a set and $S:X\times X\to X\times X$ is a bijective map such that
$$(S\times id)(id \times S)(S\times id)=(id\times S)(S \times id)(id\times S).$$
The problem of studying set-theoretical solutions of the Yang-Baxter equation was formulated by Drinfel'd in \cite{Dri}. If $(X, S)$ is a set-theoretical solution of the Yang-Baxter equation, then the map $S$ is called \textit{a switch} on $X$ (see \cite{FMK}). A pair of switches $(S,V)$ on $X$ is called \textit{a virtual switch} on $X$ if $V^2=id$ and the equality
$$(V\times id)(id \times S)(V\times id)=(id\times V)(S \times id)(id\times V)$$
holds.

Switches and virtual switches are strongly connected with virtual braid groups and virtual links. Using a (virtual) switch on a set $X$ it is possible to construct a representation of the (virtual) braid group on $n$ strands by permutations of $X^n$ (see \cite[Section~2]{FMK}). If $X$ is an algebraic system, then under additional conditions it is possible to construct a representation of the (virtual) braid group by automorphisms of $X$. The Artin representation $\varphi_A:B_n\to {\rm Aut}(F_n)$
 (see \cite[Section~1.4]{Bir}), the Burau representation $\varphi_B:B_n\to {\rm GL}_n(\mathbb{Z}[t,t^{-1}])$ (see  \cite[Section~3]{Bir}) and their extensions to the virtual braid groups $\varphi_A:VB_n\to{\rm Aut}(F_n)$, $\varphi_B:VB_n\to {\rm GL}_n(\mathbb{Z}[t,t^{-1}])$ (see \cite{BarMikNes, Ver}) can be obtained on this way.

Despite the fact that virtual switches can be used for constructing representations of  virtual braid groups, there are representations $VB_n\to{\rm Aut}(G)$, where $G$ is some group, which cannot be obtained using any virtual switch on $G$. For example, the Silver-Williams representation $\varphi_{SW}:VB_n\to {\rm Aut}(F_{n}*\mathbb{Z}^{n+1})$ (see \cite{SilWil}), the Boden-Dies-Gaudreau-Gerlings-Harper-Nicas representation $\varphi_{BD}:VB_n\to {\rm Aut}(F_{n}*\mathbb{Z}^2)$ (see \cite{BDGGHN}),  the Kamada representation $\varphi_K:VB_n\to {\rm Aut}(F_{n}*\mathbb{Z}^{2n})$ (see \cite{BN}) and the representations $\varphi_M:VB_n\to {\rm Aut}(F_n*\mathbb{Z}^{2n+1})$, $\tilde{\varphi}_M:VB_n\to {\rm Aut}(F_n*\mathbb{Z}^n)$ of Bardakov-Mikhalchishina-Neshchadim (see \cite{BarMikNes, BarMikNes2}) cannot be obtained using any virtual switch.

In the present paper we introduce the notion of a (virtual) multi-switch which generalizes the notion of a (virtual) switch. Using (virtual) multi-switches we introduce a general approach how to construct representations of (virtual) braid groups by automorphisms of algebraic systems. As a corollary, we introduce new representations of virtual braid groups which generalize several previously known representations.

The paper is organized as follows. In Section~\ref{prelimin}, we give necessary preliminaries. In particular, we recall the notion of a (virtual) switch, and provide examples and applications of (virtual) switches. In Section~\ref{sectmul}, we introduce the notion of a (virtual) multi-switch and give examples of (virtual) multi-switches. In Section~\ref{reprrr}, we describe a general construction of how a (virtual) multi-switch on an algebraic system $X$ can be used to construct a representation of the (virtual) braid group by automorphisms of $X$ (Theorem~\ref{autrepr} and Theorem~\ref{vautrepr}). As a corollary, we construct a representation $VB_n\to {\rm Aut}(FQ_n*T_n)$
of the virtual braid group $VB_n$ by automorphisms of the free product $FQ_n*T_n$ of the free quandle $FQ_n$ with $n$ generators and the trivial quandle $T_n$ on $n$ elements. Finally, in Section~\ref{seclinrep}, we introduce representations of the (virtual) braid groups by automorphisms of certain infinitely generated abelian groups, which extend the Burau representation $\varphi_B$ and the Gassner representation $\varphi_G$ (Theorem~\ref{glin} and Theorem~\ref{vvvglin}).
\subsection*{Acknowledgement} The authors thank Professor Kauffman for useful discussions. 

\section{Preliminaries}\label{prelimin}
In this section we give necessary preliminaries. We recall the notion of a switch from \cite{FMK}, and show some examples and applications of switches. We use classical notations. If $G$ is a group, and $a,b\in G$, then we denote by $a^b=b^{-1}ab$ the conjugate of $a$ by $b$, and by $a^{-b}=b^{-1}a^{-1}b$ the conjugate of $a^{-1}$ by $b$. If $X$ is a set, then we denote by ${\rm Sym}(X)$ the set of all bijections from $X$ to~$X$. All actions are supposed to be right, i.~e. if $f,g\in {\rm Sym}(X)$, then for $x\in X$ we denote $fg(x)=g(f(x))$.

\subsection{Switches and the Yang-Baxter equation} \textit{A set-theoretical solution of the Yang-Baxter equation} is a pair $(X, S)$, where $X$ is a set, and $S:X^2\to X^2$ is a bijective map such that
\begin{equation}\label{YB}(S\times id)(id \times S)(S\times id)=(id\times S)(S \times id)(id\times S).
\end{equation}
 If $(X,S)$ is a set theoretical solution of the Yang-Baxter equation, then the map $S$ from ${\rm Sym}(X^2)$ is called \textit{a switch} on $X$ (see \cite[Section~2]{FMK}). A switch $S$ is called \textit{involutive} if $S^2=id$. If $X$ is not just a set but an algebraic system: group, module etc, then every switch on $X$ is called \textit{a group switch}, \textit{a module switch} etc. Examples of switches on different algebraic systems follow.
\begin{example}
 Let $X$ be an arbitrary set, and $T(a,b)=(b,a)$ for all $a, b\in X$. The map $T$ is a switch which is called \textit{the twist}.
 It is clear that $T$ is involutive.
\end{example}
\begin{example}
Let $X$ be a group. Then the map $S_A$ defined by $
S_A(a, b) = (a b a^{-1}, a)$ for $a,b\in X$ is a switch which is called \textit{the Artin switch} on a group $X$.
\end{example}
\begin{example}\label{bsw}Let $X$ be a module over an integral domain $R$, and $t$ be an invertible element of $R$. The linear isomorphisms $S_B : X^2 \to X^2$ given by
\begin{align}
\notag S_B(a, b) = ((1 - t) a + t b, a),&&a,b\in X
\end{align}
is a module switch on $X$ which is called \textit{the Burau switch}.
If we look on $X$ as on abelian group (without module structure), then the Burau switch is also a group switch.
\end{example}
In order to introduce the next example of a switch, let us recall that \textit{a quandle} $Q$ is an algebraic system with one binary algebraic operation $(a,b)\mapsto a*b$ which satisfies the following axioms:
\begin{enumerate}
\item $a*a=a$ for all $a\in Q$,
\item the map $I_a:b\mapsto b*a$ is a bijection of $Q$ for all $a\in Q$,
\item $(a*{b})*c=(a*c)*(b*c)$ for all $a,b,c\in Q$.
\end{enumerate}
A quandle $Q$ is called \textit{trivial} if $a*b=a$ for all $a,b\in Q$, the trivial quandle with $n$ elements is denoted by $T_n$. Quandles were introduced in \cite{Joy, Mat} as an invariant for links. For more details about quandles see \cite{Carter, ElhNel, Nos}.
If $(Q,*)$ is a quandle, then for $a,b\in Q$ we denote by $a*^{-1}b=I_b^{-1}(a)$. The following  example gives a quandle switch.
\begin{example}\label{quasw} Let $(X,*)$ be a quandle. Then the map $S_Q(a, b) = (b * a, a)$ for $a, b \in X$ is a quandle switch.
\end{example}
The following example gives a switch on a skew brace  (see, for example, \cite{GuaVen,Nas} for details about skew braces).
\begin{example} Let $X$ be a skew brace with the operations $\oplus,\odot$. Then the map $S:X^2\to X^2$ given by
$$
S(a,b)=\Big(\ominus a\oplus(a\odot b),(\ominus a\oplus(a\odot b))^{-1}\odot a\odot b\Big)
$$
for $a,b\in X$ is a skew brace switch.
\end{example}
For more examples of switches see \cite[Section~2]{FMK}.
\subsection{Switches and representations of braid groups}\label{swrep} Switches can be used for constructing representations of braid groups. Let us recall the definitions. \textit{The braid group $B_n$ on $n$ strands} is the group with generators $\sigma_1,\sigma_2,\dots,\sigma_{n-1}$ and defining relations
\begin{align}
\sigma_i\sigma_{i+1}\sigma_i&=\sigma_{i+1}\sigma_i\sigma_{i+1}&i=1,2,\dots,n-2,\tag{$b_1$}\label{b1}\\
\sigma_i\sigma_j&=\sigma_{j}\sigma_i&|i-j|\geq2.\tag{$b_2$}\label{b2}
\end{align}
There exists a homomorphism $\iota: B_n\to \Sigma_n$ from the braid group $B_n$ onto the symmetric group $\Sigma_n$ on $n$ letters. This homomorphism maps the generator $\sigma_i$ to the transposition $\tau_i=(i,~i+1)$ for $i=1, 2, \dots, n - 1$. The kernel of this homomorphism is called \textit{the pure
braid group on $n$ strands} and is denoted by $P_n$.

\textit{The virtual braid group} $VB_n$ is a group obtained from $B_n$ adding new generators $\rho_1,\rho_2,\dots,\rho_{n-1}$
and additional relations
\begin{align}
\rho_i\rho_{i+1}\rho_i&=\rho_{i+1}\rho_i\rho_{i+1}&i=1,2,\dots,n-2,\tag{$vb_3$}\label{p1}\\
\rho_i\rho_j&=\rho_{j}\rho_i&|i-j|\geq2,\tag{$vb_4$}\label{p2}\\
\rho_i^2&=1&i=1,2,\dots,n-1,\tag{$vb_5$}\label{p3}\\
\rho_{i+1}\sigma_i\rho_{i+1}&=\rho_i\sigma_{i+1}\rho_i&i=1,2,\dots,n-2,\tag{$vb_6$}\label{m1}\\
\sigma_i\rho_j&=\rho_{j}\sigma_i&|i-j|\geq2.\tag{$vb_7$}\label{m2}
\end{align}
It is easy to verify that the elements $\rho_1,\rho_2,\dots,\rho_{n-1}$ generate the symmetric group $\Sigma_n$ in $VB_n$. Also it is known that the elements $\sigma_1,\sigma_2,\dots,\sigma_{n-1}$ generate
the braid group $B_n$ in $VB_n$. The homomorphism $\iota$ can be extended to the homomorphism $VB_n\to \Sigma_n$ by the rule $\iota:\sigma_i, \rho_i\mapsto\tau_i=(i,~i+1)$. The kernel of this homomorphism is called \textit{the virtual pure braid group} and is denoted by $VP_n$.

Let $S\in {\rm Sym}(X^2)$ be a switch on $X$. For $i=1,2,\dots,n-1$ denote by
$$
S_i = (id)^{i-1} \times S \times (id)^{n-i-1}.
$$
From the relations of $B_n$ and equality (\ref{YB}) we see that the map which maps $\sigma_i$ to $S_i$ for $i=1,2,\dots,n-1$ defines a representation of the braid group $B_n$ into the symmetric group ${\rm Sym}(X^n)$.
If $S$ is involutive, then the map $\tau_i\mapsto S_i$ defines a representation of the symmetric group $\Sigma_n$ on $n$ letters into the group ${\rm Sym}(X^n)$.

Under additional conditions switches can provide representations of braid groups by automorphisms of algebraic system. If $X$ is an algebraic system (for example, a quandle, a group, a module etc) generated by elements $x_1,x_2,\dots,x_n$, and $S$ is a switch on $X$
with
$$S(a,b)=(S^l(a,b), S^r(a,b))$$
 for $a,b\in X$, then for $i=1,2,\dots,n-1$ denote by $S_i:\{x_1,x_2,\dots,x_n\}\to X$ the map given by
$$
S_i(x_k)=\begin{cases}S^l(x_k,x_{k+1}),& k=i,\\
S^r(x_k,x_{k+1}),&k=i+1,\\
x_k,&k\neq i,i+1.
\end{cases}
$$
If for $i=1,2,\dots,n-1$ the maps $S_i$ induce automorphisms of $X$, then the map which maps $\sigma_i$ to $S_i$ for $i=1,2,\dots,n-1$ induces a representation $B_n\to {\rm Aut}(X)$. Note that if $X$ is a free in some variety group with the canonical generators $x_1,x_2,\dots,x_n$, then the maps  $S_i$ always induce automorphisms of $X$. The Artin representation $B_n\to {\rm Aut}(F_n)$ (see \cite[Corollary~1.8.3]{Bir}) can be obtained in this way using the Artin switch $S_A$ on $F_n$. The Burau representation $B_n\to {\rm GL}_n\left(\mathbb{Z}[t,t^{-1}]\right)$ (see \cite[Section~3]{Bir}) can be obtained in this way using the Burau switch $S_B$ on $\left(\mathbb{Z}[t,t^{-1}]\right)^n$. It is important here that the algebraic system $X$ has exactly $n$ generators.

Let $S, V\in {\rm Sym}(X^2)$ be a switch and an involutive switch on $X$, respectively. We say that the pair $(S,V)$ is \textit{a virtual switch on $X$} if the equality
\begin{equation}\label{VYB}
 (id \times V) (S \times id)(id \times V)  =   (V \times id)(id \times S)(V \times id)
\end{equation}
holds. If the pair of switches $S,V$ satisfies equality (\ref{VYB}), then we say that this pair is \textit{matched}.
\begin{example} If $X$ is a group, then $(S_A,T)$ is a virtual group switch on $X$.
\end{example}

For a virtual switch $(S, V)$ on $X$ and an integer $n\geq2$ denote by
\begin{align}
\notag S_i = (id)^{i-1} \times S \times (id)^{n-i-1},&&V_i = (id)^{i-1} \times V \times (id)^{n-i-1}
\end{align}
for $i=1,2,\dots,n-1$. From the relations of $VB_n$ and equalities (\ref{YB}), (\ref{VYB}) we see that the maps $\sigma_i \mapsto S_i$, $\rho_i\mapsto V_i$ induce a representation of the virtual  braid group $VB_n$ on $n$ strands into the symmetric group ${\rm Sym}(X^n)$.

If $X$ is an algebraic system generated by elements $x_1,x_2,\dots,x_n$, and $(S,V)$ is a virtual switch on $X$
with
\begin{align}\notag S(a,b)=(S^l(a,b), S^r(a,b)),&& V(a,b)=(V^l(a,b),V^r(a,b))
\end{align}
for $a,b\in X$, then for $i=1,2,\dots,n-1$ denote by $S_i, V_i:\{x_1,x_2,\dots,x_n\}\to X$ the maps given by
$$
S_i(x_k)=\begin{cases}S^l(x_k,x_{k+1}),& k=i,\\
S^r(x_k,x_{k+1}),&k=i+1,\\
x_k,&k\neq i,i+1,
\end{cases}~V_i(x_k)=\begin{cases}V^l(x_k,x_{k+1}),& k=i,\\
V^r(x_k,x_{k+1}),&k=i+1,\\
x_k,&k\neq i,i+1.
\end{cases}
$$
If $S_i,V_i$ induce automorphisms of $X$, then the maps $\sigma_i\mapsto S_i$, $\rho_i\mapsto V_i$ induce a representation $VB_n\to {\rm Aut}(X)$.
\subsection{Biquandles}\label{swinv} Let $S$ be a switch on $X$ such that
$$S(a,b)=(S^l(a,b), S^r(a,b)).$$
For $a,b\in X$ denote by  $S^l(a,b)=b^a$, $S^r(a,b)=a_b$, so, on $X$ we have two binary algebraic operations $(a, b) \mapsto a^b$, $(a, b) \mapsto a_b$, which are called \textit{the up operation} and \textit{the down operation} defined by $S$, respectively. The Yang-Baxter equation for $S$ implies the following equalities
\begin{align}
\label{biqiq} a^{bc}=a^{c_bb^c},&&a_{bc}=a_{c^bb_c},&&
(a^b)_{c^{b_a}}=a_{c^{b_{c^a}}}
\end{align}
for all $a,b,c\in X$. A switch $S$ is called \textit{a biquandle switch} if the following conditions hold.
\begin{enumerate}
\item The maps $f^a,f_a:X\to X$ given by  $f^a(x)=x^a$, $f_a(x)=x_a$ are bijective. We denote by $b^{a^{-1}}=(f^a)^{-1}(b)$, $b_{a^{-1}}=(f_a)^{-1}(b)$.
\item $a^{a^{-1}}=a_{a^{a^{-1}}}$ and $a_{a^{-1}}=a^{a_{a^{-1}}}$ for all $a\in X$.
\end{enumerate}

If $S$ is a biquandle switch on $X$, then the set $X$ with the up and the down operations defined by $S$ is called \textit{a biquandle} and is denoted by $(X,S)$. If $X$ is an arbitrary set, then the twist $T$ on $X$ is a biquandle switch. The biquandle $(X,T)$ is called \textit{the trivial biquandle on $X$}. In the trivial biquandle we have $a=a^b=a_b$ for all $a,b\in X$.

Biquandles were introduced in \cite{FMK} as a tool for constructing invariants for virtual knots and braids. Papers \cite{CSWES, KM, F, HK} give several application of biquandles in knot theory.

\section{Multi-switches and virtual multi-switches}\label{sectmul}
 In this section we consider a special kind of (virtual) switches, called (virtual) multi-switches, which help to provide new representations of (virtual) braid groups.

\subsection{Multi-switches} Let $X$ be a set, and $X_1,X_2,\dots,X_m$ be non-empty subsets of $X$. We say that a map $S:X^2\times X_1^2\times  X_2^2\times \dots\times X_m^2\to X^2\times X_1^2\times X_2^2\times \dots\times X_m^2$ is \textit{an $(m+1)$-switch}, or \textit{a multi-switch on $X$} (if $m$ is not specified) if $S$ is a switch on $X\times X_1\times X_2\times\dots\times X_m$ (we identify the sets $(X\times X_1\times X_2\times\dots\times X_m)^2$ and $X^2\times X_1^2\times X_2^2\times \dots\times X_m^2$, so, for $(m+1)$-tuples $A=(a_0,a_1,\dots,a_m)$, $B=(b_0,b_1,\dots,b_m)$ from $X\times X_1\times X_2\times\dots\times X_m$ we write $S(A,B)=S(a_0,b_0;a_1,b_1;\dots;a_m,b_m)$), such that
$$S(c_0,c_1,\dots,c_m)=(S_0(c_0,c_1,\dots,c_m),S_1(c_1),S_2(c_2),\dots,S_m(c_m))$$
for $c_0\in X^2$, $c_i\in X_i^2$ for $i=1,2,\dots,m$,
where $S_0, S_1,\dots,S_m$ are the maps
\begin{align}
\notag S_0&:X^2 \times X_1^2 \times X_2^2\times \dots \times X_m^2 \to X^2,\\
\notag S_i&:X_i^2  \to X_i^2,~\text{for}~i = 1, 2, \dots, m.
\end{align}
If $S$ is an $(m+1)$-switch on $X$ defined by the maps $S_0,S_1,\dots,S_m$, then we write $S=(S_0,S_1,\dots,S_m)$. Note that for $i=1,2,\dots,m$ the map $S_i$ is a switch on $X_i$.  If $X$ is not just a set but an algebraic system: group, module etc, then every multi-switch on $X$ is called \textit{a group multi-switch}, \textit{a module multi-switch} etc. We do not require here that $X_1, X_2, \dots,X_m$ are subsystems of $X$.
\begin{example}\label{exx7}Every switch on $X$ is a $1$-switch on $X$.
\end{example}
\begin{example}\label{exx8}If $S$ is a switch on $X$, and  $S_i$ is a switch on $X_i\subset X$ for $i=1,2,\dots,m$, then the map $S\times S_1\times S_2\times\dots\times S_m$ is an $(m+1)$-switch on $X$.
\end{example}
The multi-switches from Example~\ref{exx7} and Example~\ref{exx8} are in some sense trivial. The following proposition provides a module $2$-switch which generalizes the Burau switch. If $M$ is a free module over an integral domain $R$, then we can think about any subset $K$ of $R$ as about subset of $M$ (thinking about $K\subset R$ as about $Kx_0\subset M$ for a fixed non-zero element $x_0$ from $M$).
\begin{prop}\label{Burgen}Let $R$ be an integral domain, $X$ be a free module over $R$, and $X_1$ be a subset of the multiplicative group of $R$. Then the map $S_{2B}:X^2\times X_1^2\to X^2\times X_1^2$ given by
\begin{align}
\notag S_{2B}(a, b; x, y) = ((1 - y) a + x b, a; y, x)&&a,b\in X,~x,y\in X_1
\end{align}
is a $2$-switch on $X$.
\end{prop}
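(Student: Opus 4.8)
The plan is to verify, straight from the definition of a $2$-switch, that $S_{2B}$ is a switch on $X\times X_1$ having the prescribed block form. The form itself is immediate: writing $S_{2B}=(S_0,S_1)$ with
$$S_0(a,b;x,y)=\big((1-y)a+xb,\,a\big),\qquad S_1(x,y)=(y,x),$$
one sees that $S_{2B}$ does map $X^2\times X_1^2$ into $X^2\times X_1^2$ (the entry $(1-y)a+xb$ lies in $X$ because $1-y,x\in R$ act on the $R$-module $X$, and after identifying $X_1$ with $X_1x_0\subset X$ via $r\mapsto rx_0$, which is injective since $R$ is a domain and $x_0\neq0$, the last two coordinates lie in $X_1$), that the $X_1$-block $S_1$ depends only on the $X_1$-coordinates, and that $S_1$ is just the twist on $X_1$, hence a switch. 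So the whole statement reduces to showing that $S_{2B}$ is bijective and satisfies the Yang--Baxter equation~(\ref{YB}) on $(X\times X_1)^2$.

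Bijectivity is explicit: from a target tuple $(a',b';x',y')$ one reads off $y=x'$, $x=y'$, $a=b'$, and then $b=x^{-1}\big(a'-(1-y)a\big)$, a legitimate element of $X$ because $x$ is an invertible element of $R$; this prescription is a two-sided inverse of $S_{2B}$.

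The substantive step is the braid relation~(\ref{YB}) for $S_{2B}$ on $(X\times X_1)^3$, which I would check by pushing a generic triple $(a,b,c;x,y,z)$ through both composites $(S_{2B}\times id)(id\times S_{2B})(S_{2B}\times id)$ and $(id\times S_{2B})(S_{2B}\times id)(id\times S_{2B})$. On the $X_1$-coordinates both composites reduce to $(T\times id)(id\times T)(T\times id)$, hence to the permutation $(x,y,z)\mapsto(z,y,x)$, so only the $X$-block needs work. A short calculation yields for both sides the common image $X$-triple
$$\Big((1-z)a+(1-z)x\,b+xy\,c,\ \ (1-y)a+x\,b,\ \ a\Big),$$
where one uses the commutativity of $R$ (to identify $x(1-z)b$ with $(1-z)xb$ and $yxc$ with $xyc$) and the cancellation $(1-z)(1-y)+y(1-z)=1-z$, i.e. exactly the identity that makes the ordinary Burau switch $S_B$ of Example~\ref{bsw} satisfy~(\ref{YB}). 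This establishes~(\ref{YB}) for $S_{2B}$, and hence that $S_{2B}$ is a $2$-switch on $X$.

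The only real obstacle here is the bookkeeping in this last computation: after each application of $S_{2B}$ the two $X_1$-coordinates involved get swapped, and one must track which (permuted) $X_1$-coordinate multiplies which $X$-coordinate before the next switch is applied; once that is done carefully, the equality of the two $X$-blocks collapses to commutativity of $R$ together with the single cancellation above, just as for $S_B$.
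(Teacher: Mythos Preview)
Your proof is correct and follows essentially the same approach as the paper's: both give an explicit inverse for bijectivity and then verify the Yang--Baxter equation by pushing a generic triple $(a,b,c;x,y,z)$ through the two composites, arriving at the common $X$-triple $\big((1-z)a+(1-z)xb+xyc,\ (1-y)a+xb,\ a\big)$ (the paper writes the first entry as $(1-z)a+x((1-z)b+yc)$). The only cosmetic difference is that you isolate the $X_1$-block and the key cancellation $(1-z)(1-y)+y(1-z)=1-z$ explicitly, whereas the paper simply writes out the full step-by-step computation.
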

\begin{proof} Using direct calculations we see that the map $S_{2B}^{-1}$ is given by
$$S_{2B}^{-1}(a,b;x,y)=(b,y^{-1}a+y^{-1}(x-1)b;y,x),$$
therefore $S_{2B}$ is bijective, and the only moment we have to check is that equality~(\ref{YB}) holds for $S_{2B}$. Denote by $S_1=S_{2B}\times id$, $S_2=id\times S_{2B}$. Using direct calculations for $a,b,c\in X$, $x,y,z\in X_1$ we have
\begin{align}
\notag S_1S_2S_1&(a,b,c;x,y,z)=\\
\notag&=S_2S_1((1-y)a+xb,a,c;y,x,z)\\
\notag&=S_1((1-y)a+xb,(1-z)a+xc,a;y,z,x)\\
\notag&=((1-z)((1-y)a+xb)+y((1-z)a+xc),(1-y)a+xb,a;z,y,x)\\
\notag&=((1-z)a+x((1-z)b+yc), (1-y)a+xb,a;z,y,x),\\
\notag S_2S_1S_2&(a,b,c;x,y,z)=\\
\notag &=S_1S_2(a,(1-z)b+yc,b;x,z,y)\\
\notag &=S_2((1-z)a+x((1-z)b+yc),a,b;z,x,y)\\
\notag&=((1-z)a+x((1-z)b+yc), (1-y)a+xb,a;z,y,x),
\end{align}
i.~e. $S_1S_2S_1=S_2S_1S_2$, and $S_{2B}$ is a $2$-switch on $X$.
\end{proof}
If in Proposition~\ref{Burgen} we look on $X$ as on abelian group, then the module $2$-switch $S_{2B}$ is also a group $2$-switch. Under conditions of Proposition~\ref{Burgen}, let $t$ be a fixed invertible element from $R$, and $X_1=\{t\}$. Then for elements $a,b\in X$ we have $S_{2B}(a,b;t,t)=((1-t)a+tb,a;t,t)$. Looking only to the first two components we obtain the Burau switch. In this sense, the switch $S_{2B}$ generalizes the Burau switch.

Since an $(m+1)$-switch on $X$ is a switch on $X\times X_1\times X_2\times\dots\times X_m$, the notions of the involutive $(m+1)$-switch and the virtual $(m+1)$-switch follow from the same notions for switches.

\subsection{Virtual multi-switches} Usually, if $(S,V)$ is a virtual switch on a set $X$, then $V=T$ is the twist, and it is quite difficult to find a virtual switch $(S,V)$ with $V\neq T$. V.~Manturov \cite{Man2} found a virtual quandle switch $(S,V)$  such that $V$ is not the twist (see also \cite{Bar-1, BB}). Let $Q$ be a quandle, $T_1=\{t\}$ be the trivial quandle with one element, and  $X=Q*T_1$ the free product of $Q$ and $T_1$ (see more about free products of quandles in \cite{BarNas}). Let $S_Q$ be the quandle switch (from Example~\ref{quasw}) on $X$, and $V:X^2\to X^2$ the involutive switch with $V(a,b)= (b *^{-1} t, a * t)$ for $a, b \in X$. Then $(S_Q, V)$ is a virtual switch on $X$ with $V\neq T$. Using this virtual switch Manturov \cite{Man2}  constructed a quandle invariant for virtual links which generalizes the quandle of Kauffman \cite{Kau}.

In this subsection we construct two virtual multi-switches $(S,V)$ with $V\neq T$: the first one is the virtual module $3$-switch which extends the $2$-switch $S_{2B}$ introduced in Proposition~\ref{Burgen}; the second one is the virtual $2$-switch on a biquandle which leads to a  virtual quandle $2$-switch which generalizes the virtual switch of Manturov.

The following proposition provides a virtual  module $3$-switch.
\begin{prop}\label{Burgen3}Let $R$ be an integral domain, $X$ be the free module over $R$, and $X_1,X_2$ be subsets of the multiplicative group of $R$. Then the pair of maps $(S_{3B},V_{3B})$ from $X^2\times X_1^2\times X_2^2$ to itself  given by
\begin{align}
\notag S_{3B}(a,b;x,y;p,q)&=((1 - y) a + x b, a; y, x;q,p),\\
\notag V_{3B}(a,b;x,y;p,q)&=(pb,q^{-1}a;y,x;q,p)
\end{align}
for $a,b\in X$, $x,y\in X_1$, $p,q\in X_2$ is a virtual $3$-switch on $X$.
\end{prop}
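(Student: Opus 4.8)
The plan is to check, in order, the three conditions defining a virtual $3$-switch: that $S_{3B}$ is a switch on $X\times X_1\times X_2$ of multi-switch form $(S_0,S_1,S_2)$, that $V_{3B}$ is an involutive switch of the same form, and that the pair $(S_{3B},V_{3B})$ is matched, that is, satisfies equality~(\ref{VYB}) on $(X\times X_1\times X_2)^3$.

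For $S_{3B}$ almost nothing new is needed. The $X_2$-entries of $S_{3B}$ are merely transposed and do not enter any other coordinate, so $S_{3B}=S_{2B}\times T$, where $S_{2B}$ acts on the $X$- and $X_1$-coordinates as in Proposition~\ref{Burgen} and $T$ is the twist on $X_2$. A direct product of a switch (here $S_{2B}$, by Proposition~\ref{Burgen}) with the involutive switch $T$ is again a switch, since the Yang--Baxter equation~(\ref{YB}) holds component by component; bijectivity is inherited from the two factors, with inverse $S_{3B}^{-1}(a,b;x,y;p,q)=(b,\,y^{-1}a+y^{-1}(x-1)b;\,y,x;\,q,p)$ (the same computation as for $S_{2B}^{-1}$). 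Finally $S_{3B}=(S_0,T,T)$ is visibly of multi-switch form.

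For $V_{3B}$ I would first check $V_{3B}^2=id$ by a single substitution: applying $V_{3B}$ twice sends $(a,b;x,y;p,q)$ to $(q\,q^{-1}a,\,p^{-1}p\,b;\,x,y;\,p,q)=(a,b;x,y;p,q)$, which uses only that the entries of $X_1$ and $X_2$ are invertible in $R$; in particular $V_{3B}$ is bijective. That $V_{3B}$ satisfies~(\ref{YB}) I would verify in the style of the proof of Proposition~\ref{Burgen}: with $V_1=V_{3B}\times id$ and $V_2=id\times V_{3B}$, evaluate $V_1V_2V_1$ and $V_2V_1V_2$ on a triple $(a,b,c;x,y,z;p,q,r)$, obtaining $(qpc,\,r^{-1}pb,\,r^{-1}q^{-1}a;\,z,y,x;\,r,q,p)$ and $(pqc,\,pr^{-1}b,\,q^{-1}r^{-1}a;\,z,y,x;\,r,q,p)$ respectively, which agree because $R$ is commutative ($qp=pq$, $r^{-1}p=pr^{-1}$, $r^{-1}q^{-1}=q^{-1}r^{-1}$). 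Since $V_{3B}=(V_0,T,T)$ is also of multi-switch form, it is an involutive $3$-switch.

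The remaining and most delicate point is equality~(\ref{VYB}) for $(S_{3B},V_{3B})$, that is, $V_2S_1V_2=V_1S_2V_1$ as maps of $X^3\times X_1^3\times X_2^3$, with $S_1=S_{3B}\times id$, $S_2=id\times S_{3B}$, $V_1=V_{3B}\times id$, $V_2=id\times V_{3B}$. I would again evaluate both compositions on $(a,b,c;x,y,z;p,q,r)$, tracking carefully how the scalars $p,q,r$ migrate through the three maps of each composition; the left-hand side comes out to $((1-z)a+xqc,\,pr^{-1}b,\,q^{-1}a;\,z,y,x;\,r,q,p)$ and the right-hand side to $((1-z)a+xqc,\,r^{-1}pb,\,q^{-1}a;\,z,y,x;\,r,q,p)$, so the only point to check is $pr^{-1}b=r^{-1}pb$, which is again the commutativity of $R$. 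I expect this last calculation to be the main obstacle, but it is purely bookkeeping: once one keeps straight which coordinate each scalar from $X_2$ acts on after each transposition, everything reduces to Proposition~\ref{Burgen}, the invertibility of the entries of $X_1$ and $X_2$, and the commutativity of the ground ring.
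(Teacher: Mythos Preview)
Your proposal is correct and follows essentially the same approach as the paper: factor $S_{3B}=S_{2B}\times T$ to inherit the Yang--Baxter equation from Proposition~\ref{Burgen}, verify $V_{3B}^2=id$ and the Yang--Baxter equation for $V_{3B}$ by direct evaluation on a triple, and then check the matched condition $V_2S_1V_2=V_1S_2V_1$ by the same kind of bookkeeping, with all discrepancies absorbed by commutativity of $R$. One tiny slip: in the matched computation the first $X$-coordinate on the right-hand side is actually $(1-z)a+qxc$ rather than $(1-z)a+xqc$, so commutativity is needed there as well as in the second coordinate; this changes nothing in the argument.
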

\begin{proof} It is clear that the maps $S_{3B}$, $V_{3B}$ are invertible. Since $S_{3B}$ acts as the $2$-switch $S_{2B}$ introduced in Proposition~\ref{Burgen} on the first four arguments, and as the twist $T$ on the last two arguments, i.~e. $S_{3B}=S_{2B}\times T$, the map $S_{3B}$ is a $3$-switch on $X$. So, we need to check that $V_{3B}$ is an involutive $3$-switch on $X$, and that the pair $(S_{3B}, V_{3B})$ is matched. Denote by $S_1=S_{3B}\times id$, $S_2=id\times S_{3B}$, $V_1=V_{3B}\times id$, $V_2=id\times V_{3B}$. For $a,b,c\in X$, $x,y,z\in X_1$, $p,q,r\in X_2$ we have
\begin{align}
\notag V_1V_2V_1(a,b,c;x,y,z;p,q,r)&=V_2V_1(pb,q^{-1}a,c;y,x,z;q,p,r)\\
\notag &=V_1(pb,pc,r^{-1}q^{-1}a;y,z,x;q,r,p)\\
\notag &=(qpc,r^{-1}pb,r^{-1}q^{-1}a;z,y,x;r,q,p),\\
\notag V_2V_1V_2(a,b,c;x,y,z;p,q,r)&=V_1V_2(a,qc,r^{-1}b;x,z,y;p,r,q)\\
\notag &=V_2(pqc,r^{-1}a,r^{-1}b;z,x,y;r,p,q)\\
\notag &=(pqc,pr^{-1}b,q^{-1}r^{-1}a;z,y,x;r,q,p),
\end{align}
therefore $V_{3B}$ is a $3$-switch on $X$. The equality
$$V^2(a,b;x,y;p,q)=V(pb,q^{-1}a;y,x;q,p)=(a,b;x,y;p,q)$$
implies that $V$ is involutive. The equalities
\begin{align}
\notag V_2S_1V_2(a,b,c;x,y,z;p,q,r)&=S_1V_2(a,qc,r^{-1}b;x,z,y;p,r,q)\\
\notag&=V_2((1-z)a+xqc,a,r^{-1}b;z,x,y;r,p,q)\\
\notag&=((1-z)a+xqc,pr^{-1}b,q^{-1}a;z,y,x;r,q,p)\\
\notag V_1S_2V_1(a,b,c;x,y,z;p,q,r)&=S_2V_1(pb,q^{-1}a,c;y,x,z;q,p,r)\\
\notag &=V_1(pb,(1-z)q^{-1}a+xc,q^{-1}a;y,z,x;q,r,p)\\
\notag &=((1-z)a+qxc,pr^{-1}b,q^{-1}a;z,y,x;r,q,p)
\end{align}
imply that $(S_{3B}, V_{3B})$ is a matched pair of $3$-switches. Therefore $(S_{3B}, V_{3B})$ is a virtual $3$-switch on $X$.
\end{proof}
In the proof of Proposition~\ref{Burgen3} we noticed that $S_{3B}=S_{2B}\times T$. In this sense, the virtual $3$-switch $(S_{3B},V_{3B})$ extends the $2$-switch $S_{2B}$. The following proposition provides a  $2$-switch on a biquandle. 
\begin{prop}\label{biqvmsw}Let $X$ be a biquandle, $X_1$ be a trivial subbiquandle  of $X$, and $S,V$ be the maps from $X^2\times X_1^2$ to itself such that
\begin{align}
\notag S(a, b; x, y) = (b^a, a_b; y, x),&& V(a,b;x,y)=(b^{x^{-1}},a^y;y,x)
\end{align}
for $a,b\in X$, $x,y\in X_1$. If for all $a,b\in X$, $x\in X_1$ the equalities
\begin{align}\label{needeq}
 b^{x^{-1}a}=b^{a^xx^{-1}},&& (a_{b^{y^{-1}}})^y=(a^y)_{b}
\end{align}
hold, then $(S,V)$ is a virtual $2$-switch on $X$.
\end{prop}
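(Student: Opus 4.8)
The plan is to verify the three requirements for $(S,V)$ to be a virtual $2$-switch on $X$: that $S$ is a $2$-switch, that $V$ is an involutive $2$-switch, and that $(S,V)$ is matched, i.e. satisfies~(\ref{VYB}). The first point is immediate: since $X_1$ is a trivial subbiquandle, $S$ is the product $S_X\times T$ of the biquandle switch $S_X(a,b)=(b^a,a_b)$ on $X$ with the twist $T$ on $X_1$, so $S$ is a $2$-switch on $X$ by Example~\ref{exx8}. Both $S$ and $V$ already act as the twist on the two $X_1$-coordinates, so they have the required multi-switch shape, and the real content is the Yang--Baxter equation for $V$ together with the matched condition.

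Involutivity of $V$ is a one-line check: $V^2(a,b;x,y)=V(b^{x^{-1}},a^y;y,x)=((a^y)^{y^{-1}},(b^{x^{-1}})^{x};x,y)=(a,b;x,y)$ by the definition of $b^{x^{-1}}$ and $a^y$, so in particular $V$ is bijective. The key structural observation I would record next is that for $x,y\in X_1$ the up-maps $f^x,f^y\colon X\to X$ commute: specializing the first identity in~(\ref{biqiq}) to $b,c\in X_1$ gives $(a^x)^y=(a^y)^x$ because $y_x=y$ and $x^y=x$ in the trivial subbiquandle, so the maps $f^x$ with $x\in X_1$, together with their inverses, pairwise commute on $X$. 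Using this I would expand $V_1V_2V_1$ and $V_2V_1V_2$ on a triple $(a,b,c;x,y,z)$, with $V_i=(id)^{i-1}\times V\times (id)^{n-i-1}$ as in Section~\ref{swrep}: the three $X_1$-coordinates realize the braid relation for transpositions and agree, while the three $X$-coordinates reduce to $(a^y)^z=(a^z)^y$, $(b^{x^{-1}})^z=(b^z)^{x^{-1}}$ and $(c^{x^{-1}})^{y^{-1}}=(c^{y^{-1}})^{x^{-1}}$, each of which follows from the commutation observation. Hence $V$ is an involutive $2$-switch.

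Finally I would verify~(\ref{VYB}), which for multi-switches evaluated on a triple takes the form $V_2S_1V_2=V_1S_2V_1$. Expanding both sides on $(a,b,c;x,y,z)$ and comparing coordinatewise, the $X_1$-parts again agree automatically, the middle $X$-coordinate reduces once more to $(b^{x^{-1}})^z=(b^z)^{x^{-1}}$, and the remaining two $X$-coordinates reduce precisely to $(c^{y^{-1}})^a=(c^{a^y})^{y^{-1}}$ and $(a_{c^{y^{-1}}})^y=(a^y)_c$, which are exactly the two hypotheses in~(\ref{needeq}). Therefore $(S,V)$ is matched, and so it is a virtual $2$-switch on $X$. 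The conceptual heart of the argument is the commutation observation; after that the only obstacle is the bookkeeping in the triple compositions of up- and down-operations, where one must track carefully which coordinates each $S_i$ and $V_i$ affects, and the point of the proof is that the identities~(\ref{needeq}) are tailored to make exactly those two coordinates of the matched equation work out.
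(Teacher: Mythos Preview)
Your proposal is correct and follows essentially the same route as the paper: verify that $S$ is a $2$-switch as a product of the biquandle switch with the twist, check that $V$ is involutive, expand $V_1V_2V_1$ and $V_2V_1V_2$ and use the identity~(\ref{biqiq}) together with triviality of $X_1$ (your ``commutation observation'') to get the Yang--Baxter relation for $V$, and then expand $V_2S_1V_2$ and $V_1S_2V_1$ and observe that the two nontrivial $X$-coordinates are matched precisely by the hypotheses~(\ref{needeq}) while the middle $X$-coordinate again uses the commutation of the $f^x$ for $x\in X_1$. The only difference is presentational: you isolate the commutation of the $f^x$ as an explicit lemma, whereas the paper simply cites~(\ref{biqiq}) and the triviality of $X_1$ at the relevant steps.
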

\begin{proof} Denote by $S_1=S\times id$, $S_2=id\times S$, $V_1=V\times id$, $V_2=id\times V$. Since $S$ acts separately on the first pair of arguments, and separately on the second pair of arguments, the fact that $S$ is a $2$-switch on $X$ follows from the fact that the map $(a,b)\mapsto (b^a,a_b)$ is a switch on $X$, and the map $(x,y)\mapsto (y,x)$ is a switch on $X_1$. Using direct calculations for $a,b,c\in X$, $x,y,z\in X_1$ we have
\begin{align}
\notag V_1V_2V_1(a,b,c;x,y,z)&=(c^{x^{-1}y^{-1}},b^{x^{-1}z},a^{yz};z,y,x),\\
\notag V_1V_1V_2(a,b,c;x,y,z)&=(c^{y^{-1}x^{-1}},b^{zx^{-1}},a^{zy};z,y,x).
\end{align}
These equalities together with equalities (\ref{biqiq}) and the fact that $X_1$ is a trivial biquandle imply that $V_1V_2V_1=V_2V_1V_2$, i.~e. $V$ is a $2$-switch on $X$. The equalities
$$V^2(a,b;x,y)=V(b^{x^{-1}},a^y;y,x)=(a^{yy^{-1}}, b^{x^{-1}x};x,y)=(a,b;x,y)$$
imply that $V$ is involutive, and the only fact we have to prove is that the pair $S,V$ is matched. For $a,b,c\in X$, $x,y,z\in X_1$ we have
\begin{align}
\notag V_2S_1V_2(a,b,c;x,y,z)&=S_1V_2(a,c^{y^{-1}},b^z;x,z,y)\\
\notag &=V_2(c^{y^{-1}a},a_{c^{y^{-1}}},b^z;z,x,y)\\
\notag &=(c^{y^{-1}a},b^{zx^{-1}},(a_{c^{y^{-1}}})^y;z,y,x)\\
\notag V_1S_2V_1(a,b,c;x,y,z)&=S_2V_1(b^{x^{-1}},a^y,c;y,x,z)\\
\notag &=V_1(b^{x^{-1}},c^{a^y},(a^y)_c;y,z,x)\\
\notag &=(c^{a^yy^{-1}},b^{x^{-1}z},(a^y)_c;z,y,x)
\end{align}
which together with equalities (\ref{needeq}) from the conditions of the proposition imply the equality $V_2S_1V_2=V_1S_2V_1$.
\end{proof}
\begin{cor}\label{gman}Let $X$ be a quandle with a trivial subquandle $X_1$, and $S,V$ be the maps from $X^2\times X_1^{2}$ to itself defined by
\begin{align}
\notag S(a,b;x,y)=(b*a,a;y,x),&&V(a,b;x,y)=(b*^{-1}x,a*y;y,x)
\end{align}
for $a,b\in X$, $x,y\in X_1$. Then $(S,V)$ is a virtual $2$-switch on $X$.
\end{cor}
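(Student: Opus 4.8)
The plan is to deduce the statement from Proposition~\ref{biqvmsw} by realizing the quandle $X$ as a biquandle. Concretely, I would put on $X$ the biquandle structure whose up operation is $a^b:=a*b$ and whose down operation is the trivial one $a_b:=a$; the associated switch is precisely the quandle switch $S_Q(a,b)=(b*a,a)$ of Example~\ref{quasw}. To justify that this really is a biquandle I would check the two extra conditions from Subsection~\ref{swinv}: the maps $f^a\colon x\mapsto x*a=I_a(x)$ are bijective by the second quandle axiom and $f_a=\mathrm{id}$ is trivially bijective, so that $b^{a^{-1}}=b*^{-1}a$ and $b_{a^{-1}}=b$; and the identities $a^{a^{-1}}=a_{a^{a^{-1}}}$, $a_{a^{-1}}=a^{a_{a^{-1}}}$ both reduce, using the triviality of the down operation, to $a*^{-1}a=a$, which is immediate from $a*a=a$. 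Moreover, on the trivial subquandle $X_1$ we have $a^b=a*b=a$ and $a_b=a$, so $X_1$ is a trivial subbiquandle of $X$ in the sense of Subsection~\ref{swinv}.

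With this identification the maps of the statement become exactly the maps of Proposition~\ref{biqvmsw}: since $b^a=b*a$, $a_b=a$, $b^{x^{-1}}=b*^{-1}x$ and $a^y=a*y$, we have
\begin{align*}
S(a,b;x,y)=(b^a,a_b;y,x),&&V(a,b;x,y)=(b^{x^{-1}},a^y;y,x).
\end{align*}
Thus it suffices to check that the two hypotheses~(\ref{needeq}) of Proposition~\ref{biqvmsw} hold for this biquandle. The identity $(a_{b^{y^{-1}}})^y=(a^y)_b$ holds trivially: both sides equal $a*y$ because the down operation is trivial. For the identity $b^{x^{-1}a}=b^{a^xx^{-1}}$, unwinding the exponent-word convention $b^{uv}=(b^u)^v$ turns it into the quandle equality $(b*^{-1}x)*a=\bigl(b*(a*x)\bigr)*^{-1}x$, which I would prove by applying the map $I_x\colon u\mapsto u*x$ (an automorphism of $X$: an endomorphism by the third quandle axiom, bijective by the second) to both sides — the left-hand side is sent to $\bigl((b*^{-1}x)*x\bigr)*(a*x)=b*(a*x)$ and the right-hand side to $b*(a*x)$ as well, so injectivity of $I_x$ finishes the check. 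Note that this argument works for all $x\in X$, which is more than enough. Proposition~\ref{biqvmsw} then gives immediately that $(S,V)$ is a virtual $2$-switch on $X$.

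The computational content is therefore very light, and I do not expect a real obstacle. The only points deserving care are matching the up/down-operation conventions of Subsection~\ref{swinv} so that the maps of the corollary coincide with those of Proposition~\ref{biqvmsw}, and recognizing that the one nontrivial identity in~(\ref{needeq}) is a single application of the inner automorphism $I_x$.
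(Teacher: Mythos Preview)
Your proposal is correct and follows exactly the paper's approach: set $a^b=a*b$, $a_b=a$, observe that this makes $X$ a biquandle with $X_1$ a trivial subbiquandle, and then invoke Proposition~\ref{biqvmsw}. The only difference is that you spell out the verification of the biquandle axioms and of the identities~(\ref{needeq}) in detail, whereas the paper simply declares that they ``obviously hold in this situation''; your explicit check via the inner automorphism $I_x$ is a clean way to justify the one nontrivial identity.
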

\begin{proof} For $a,b\in X$ denote by $a^b=a*b$, $a_b=a$. Then $X$ with the operations $(a,b)\mapsto a_b$, $(a,b)\mapsto a^b$ is a biquandle, and $X_1$ is a trivial subbiquandle of this biquandle. Equalities (\ref{needeq}) obviously hold in this situation, and the result follows from Proposition~\ref{biqvmsw}.
\end{proof}
\begin{cor}\label{nweq}Let $Q$ be a quandle, $X_1$ be a trivial quandle, and $X=Q*X_1$ be the free product of $Q$ and $X_1$. Let $S_{2Q},V_{2Q}:X^2\times X_1^{2}\to X^2\times X_1^{2}$ be the maps defined by
\begin{align}
\notag S_{2Q}(a,b;x,y)=(b*a,a;y,x),&&V_{2Q}(a,b;x,y)=(b*^{-1}x,a*y;y,x)
\end{align}
for $a,b\in X$, $x,y\in X_1$. Then $(S_{2Q},V_{2Q})$ is a virtual $2$-switch on $X$.
\end{cor}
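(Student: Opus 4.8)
The plan is to deduce this corollary directly from Corollary~\ref{gman}. The maps $S_{2Q}$ and $V_{2Q}$ are given by exactly the same formulas as the maps $S$ and $V$ in Corollary~\ref{gman}, with the ambient quandle taken to be $X=Q*X_1$. Hence it suffices to verify the single hypothesis of Corollary~\ref{gman} in this setting, namely that $X_1$, viewed inside $X=Q*X_1$ through the canonical map, is a trivial subquandle of $X$.

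First I would recall the relevant facts about free products of quandles from \cite{BarNas}: for quandles $Q$ and $X_1$ the free product $Q*X_1$ comes equipped with canonical quandle homomorphisms $\iota_Q:Q\to Q*X_1$ and $\iota_{X_1}:X_1\to Q*X_1$, and both are injective. Consequently $\iota_{X_1}(X_1)$ is a subquandle of $X$ isomorphic to $X_1$; since the operation on a subquandle is the restriction of the operation on $X$, and $X_1$ is trivial (that is, $x*y=x$ for all $x,y\in X_1$), the image $\iota_{X_1}(X_1)$ is a trivial subquandle of $X$. Identifying $X_1$ with its image, we obtain that $X_1$ is a trivial subquandle of $X=Q*X_1$, which is exactly the hypothesis of Corollary~\ref{gman}.

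With this in hand, Corollary~\ref{gman} applies verbatim: for $a,b\in X$ and $x,y\in X_1$ the expressions $b*^{-1}x$ and $a*y$ are well defined because each $I_x$ is a bijection of $X$, and the pair $(S_{2Q},V_{2Q})$ coincides with the pair $(S,V)$ of Corollary~\ref{gman}. The Yang--Baxter identity for $S_{2Q}$, the involutivity of $V_{2Q}$, the Yang--Baxter identity for $V_{2Q}$, and the matching condition were all checked once and for all in Proposition~\ref{biqvmsw} (via the biquandle operations $a^b=a*b$, $a_b=a$, for which the required equalities~(\ref{needeq}) hold trivially) and hence in Corollary~\ref{gman}. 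Therefore $(S_{2Q},V_{2Q})$ is a virtual $2$-switch on $X$.

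The main --- and essentially the only --- point requiring care is the structural claim that the canonical map $X_1\to Q*X_1$ is an injective quandle homomorphism, so that $X_1$ genuinely sits inside the free product as a trivial subquandle; this is a standard property of free products of quandles established in \cite{BarNas}. Once it is granted, no further computation is needed.
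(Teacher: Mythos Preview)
Your proposal is correct and follows exactly the approach the paper intends: Corollary~\ref{nweq} is stated without proof because it is an immediate specialization of Corollary~\ref{gman}, once one notes that $X_1$ sits inside the free product $Q*X_1$ as a trivial subquandle. The only point needing justification is indeed the injectivity of the canonical map $X_1\to Q*X_1$, which you correctly source from \cite{BarNas}.
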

If in Corollary~\ref{nweq} we take $X_1$ a trivial quandle with only one element, then looking to the first two components of the maps $S_{2Q},V_{2Q}$ we obtain the switch of Manturov described at the beginning of this section.

\section{Multi-switches and representations of braid groups}\label{reprrr}
In Section~\ref{swrep} we noticed that a (virtual) switch on an algebraic system $X$ with exactly $n$ generators can be used to construct a representation of the (virtual) braid group on $n$ strands by automorphisms of $X$. However, there are representations $VB_n\to {\rm Aut}(G)$ (where $G$ is some group) which cannot be defined using procedure described in Section~\ref{swrep} for any virtual switch $(S,V)$ on $G$. For example, let $F_n$ be the free group with the free generators $x_1, x_2, \dots, x_n$, $\mathbb{Z}^{2n+1}$ be the free abelian group with canonical generators $u_1, u_2, \dots, u_n$, $v_0, v_1, \dots, v_n$, and $F_{n,2n+1} = F_n * \mathbb{Z}^{2n+1}$ be the free product of $F_n$ and $\mathbb{Z}^{2n+1}$, then the representation $\varphi_M:VB_n\to {\rm Aut}(F_{n,2n+1})$ introduced in  \cite{BarMikNes, BarMikNes2} which acts on the generators of
$F_{n,2n+1}$ in the following way
\begin{align}\label{barmnes}
\varphi_M(\sigma_i):
\begin{cases}
x_i \mapsto  x_i x_{i+1}^{u_i} x_i^{- v_0 u_{i+1}},&\\
x_{i+1} \mapsto x_i^{v_0},&\\
u_i\mapsto u_{i+1},&\\
u_{i+1}\mapsto u_i,&\\
v_i\mapsto v_{i+1},&\\
v_{i+1}\mapsto v_i,&\\
\end{cases}&&\varphi_M(\rho_i):
\begin{cases}
x_i \mapsto  x_{i+1}^{v_i^{-1}},&\\
x_{i+1} \mapsto x_i^{v_{i+1}},&\\
u_i\mapsto u_{i+1},&\\
u_{i+1}\mapsto u_i,&\\
v_i\mapsto v_{i+1},&\\
v_{i+1}\mapsto v_i,&\\
\end{cases}
\end{align}
(hereinafter we write only non-trivial actions on the generators, assuming that all other generators are fixed) cannot be defined using procedure described in Section~\ref{swrep} for any virtual switch $(S,V)$ on $F_{n,2n+1}$ (since $F_{n,2n+1}$ has $3n+1$ generators). 

The representation $\tilde{\varphi}_M:VB_n\to {\rm Aut}(F_{n,n})$, where $F_{n,n}=F_n*\mathbb{Z}^n$ is the free product of the free group $F_n$ generated by $x_1,x_2,\dots,x_n$, and the free abelian group $\mathbb{Z}^n$ generated by $y_1,y_2,\dots,y_n$ introduced in \cite{BarMikNes} which acts on the generators of $F_{n,n}$ by the rule
\begin{align}\label{uvartin}
\tilde{\varphi}_M(\sigma_i):\begin{cases}x_i\mapsto x_ix_{i+1}x_i^{-1},\\
x_{i+1} \mapsto x_i,\\
y_i \mapsto y_{i+1},\\
y_{i+1}\mapsto y_i.
\end{cases}&&
\tilde{\varphi}_M(\rho_i):\begin{cases}x_i \mapsto y_ix_{i+1}y_i^{-1},\\
x_{i+1} \mapsto y_{i+1}^{-1}x_iy_{i+1},\\
y_i \mapsto y_{i+1},\\
y_{i+1}\mapsto y_i.
\end{cases}
\end{align}
gives another example of the representation of $VB_n$ which cannot be defined using procedure described in Section~\ref{swrep} for any virtual switch $(S,V)$ on $F_{n,n}$. Note that the representations $\varphi_M$ and $\tilde{\varphi}_M$ given in (\ref{barmnes}), (\ref{uvartin}) have the same kernel \cite{BarMikNes}.

In this section we describe a general construction of how a (virtual) multi-switch on an algebraic system $X$ can be used to construct a representation of the (virtual) braid group on $n$ strands by automorphisms of $X$. We do not require that $X$ has exactly $n$ generators, so, a lot of known representations of virtual braid groups by automorphisms of groups (in particular, representations $\varphi_M$, $\tilde{\varphi}_M$ given by formulas (\ref{barmnes}), (\ref{uvartin}), respectively) can be constructed using the procedure which we will introduce in this section.

Since a virtual $(m+1)$-switch $(S,V)$ on an algebraic system $X$ is a switch on $X\times X_1\times X_2\times\dots\times X_m$, from the first glance it can seem that one can apply the procedure from Section~\ref{swrep} in order to construct a representation of the virtual braid group by automorphisms of $X$. However, it is not true due to the fact that $(S,V)$ is a virtual switch on  $X\times X_1\times X_2\times\dots\times X_m$ (but not on $X$), and the set  $X\times X_1\times X_2\times\dots\times X_m$ doesn't necessarily have a structure of algebraic system.
\subsection{General construction}\label{gencon} Let $X$ be an algebraic system, and $X_0,X_1,\dots, X_m$ be subsets of $X$  such that
\begin{enumerate}
\item   for $i=0,1,\dots,m$ the set $X_i$ contains elements $x^{i}_{1},x^{i}_{2},\dots,x^{i}_{n}$,
\item $\{x^{i}_{1}, x^{i}_2,\dots,x^{i}_{n}\}\cap\{x^{j}_{1}, x^{j}_2,\dots,x^{j}_{n}\}=\varnothing$ for $i\neq j$,
\item the set of elements $\{x^{i}_j~|~i=0,1,\dots,m,j=1,2,\dots,n\}$ generates $X$.
\end{enumerate}
Let $S=(S_0,S_1,\dots,S_m)$ be an $(m+1)$-switch on $X$ such that
\begin{align}
\notag S_0=(S_0^l,S_0^r)&:X^2 \times X_1^2 \times X_2^2 \times \dots \times X_m^2 \to X^2,\\
\notag S_i=(S_i^l,S_i^r)&:X_i^2  \to X_i^2,~\text{for}~i = 1, 2, \dots, m,
\end{align}
and for $i=0,1,\dots,m$ the images of maps $S_i^l,S_i^r$  are words over its arguments in terms of operations of $X$. For $j=1,2,\dots,n-1$ denote by $R_j$ the following map from $X_0\cup X_1\cup \dots \cup X_m$ to $X$
\begin{align}
\label{fj} R_j :&\begin{cases}
x^0_{j} \mapsto S_0^l(x^0_{j}, x^0_{j+1}, x^1_{j}, x^1_{j+1}, \dots, x^m_{j}, x^m_{j+1}),\\
x^0_{j+1} \mapsto S_0^r(x^0_{j}, x^0_{j+1}, x^1_{j}, x^1_{j+1}, \dots, x^m_{j}, x^m_{j+1}),\\
x^1_{j} \mapsto S_1^l(x^1_{j}, x^1_{j+1}),\\
x^1_{j+1} \mapsto S_1^r(x^1_{j}, x^1_{j+1}), \\
~~\vdots\\
x^m_{j} \mapsto S_m^l(x^m_{j}, x^m_{j+1}),\\
x^m_{j+1} \mapsto S_m^r(x^m_{j}, x^m_{j+1}),
\end{cases}
\end{align}
where all generators which are not explicitly written in $R_j$ are fixed, i.~e. $R_j(x_k^i)=x_k^i$ for $k\notin \{j,j+1\}$, $i=0,1,\dots,m$, and assume that $R_j$ is well defined: since the elements $x^{i}_{1},x^{i}_{2},\dots,x^{i}_{n}$ are not necessary all different, some of these elements can coincide. The fact that $R_j$ is well defined means that the images of equal elements are equal. For example, if $x_j^i=x_{j+1}^i$, then we assume that
$$S_i^l(x_j^i,x_{j+1}^i)=R_j(x_j^i)=R_j(x_{j+1}^i)=S_i^r(x_j^i,x_{j+1}^i).$$
If for $j=1,2,\dots,n-1$ the maps $R_j$, induce automorphisms of $X$, then we say that $S$ is \textit{an automorphic multi-switch} (shortly, AMS) or an automorphic $(m+1)$-switch on $X$ with respect to the set of generators $\{x^i_{j}~|~i=0,1,\dots,m,j=1,2,\dots,n\}$. An $(m+1)$-switch can be AMS with respect to one generating set of $X$, but not AMS with respect to another generating set of $X$.
\begin{theorem}\label{autrepr}
Let $X$ be an algebraic system, and $S$ be an AMS on $X$
with respect to the set of generators $\{x^i_{j}~|~i=0,1,\dots,m,j=1,2,\dots,n\}$. Then the map
$$
\varphi_{S} : B_n \to {\rm Aut}(X)
$$
which is defined on the generators of $B_n$ as
\begin{align}
\notag\varphi_{S}(\sigma_j) = R_j,&&{\text for}~j = 1, 2, \dots, n-1,
\end{align}
where $R_j$ is defined by equality (\ref{fj}), is a representation of $B_n$.
\end{theorem}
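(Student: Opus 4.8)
The plan is to verify that the assignment $\sigma_j \mapsto R_j$ respects the two families of defining relations \eqref{b1} and \eqref{b2} of $B_n$; since each $R_j$ is assumed to be an automorphism of $X$ (this is exactly the AMS hypothesis), checking the relations is enough to produce the homomorphism $\varphi_S:B_n\to\operatorname{Aut}(X)$. The key observation is that these relations already hold on the level of the symmetric group $\operatorname{Sym}\big((X\times X_1\times\dots\times X_m)^n\big)$: since $S=(S_0,S_1,\dots,S_m)$ is an $(m+1)$-switch, it is in particular a switch on $Y:=X\times X_1\times\dots\times X_m$, so by the discussion in Section~\ref{swrep} the maps $S_j=(id)^{j-1}\times S\times(id)^{n-j-1}$ satisfy $S_jS_{j+1}S_j=S_{j+1}S_jS_{j+1}$ and $S_jS_k=S_kS_j$ for $|j-k|\ge 2$. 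The maps $R_j$ are precisely the ``coordinate-wise'' shadow of the $S_j$ on the generating set $\{x^i_k\}$: the $j$-th and $(j+1)$-st columns $(x^0_j,x^1_j,\dots,x^m_j)$, $(x^0_{j+1},\dots,x^m_{j+1})$ of generators play the role of the two tuples $A,B\in Y$ on which $S$ acts, and $R_j$ fixes all other generators.

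First I would make this correspondence precise. Fix the "formal" copy $Y^n$ with coordinates indexed by $\{x^i_k\}$ and consider how $S_j$ transforms the $j$-th and $(j{+}1)$-st $Y$-coordinates according to the formula $S(A,B)=\big(S_0(\dots),S_1(\dots),\dots,S_m(\dots)\big)$ from the definition of a multi-switch; comparing with \eqref{fj} shows that $R_j$ sends each generator $x^i_k$ to the same word in the generators that $S_j$ produces in the corresponding coordinate. Because $S_0^l,S_0^r,\dots,S_m^l,S_m^r$ are, by hypothesis, words over their arguments in the operations of $X$, applying an endomorphism of $X$ commutes with forming these words; hence for any endomorphism $f$ one has $f\circ R_j = $ "substitute $f(x^i_k)$ into the word". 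Consequently, for the composite $R_jR_{j+1}R_j$ (recall actions are on the right, so this means "apply $R_j$, then $R_{j+1}$, then $R_j$"), the image of any generator is obtained by iterating the coordinate substitution three times, in exactly the same bookkeeping pattern as $S_jS_{j+1}S_j$ acting on the three relevant columns of $Y^n$. Since $S_jS_{j+1}S_j = S_{j+1}S_jS_{j+1}$ as maps $Y^n\to Y^n$, the two triple-composites of the $R$'s agree on every generator, hence agree as endomorphisms of $X$; they are automorphisms by the AMS hypothesis, so \eqref{b1} holds. The commuting relation \eqref{b2} is the same argument but easier: for $|j-k|\ge 2$ the columns involved in $R_j$ and $R_k$ are disjoint (they touch columns $\{j,j{+}1\}$ and $\{k,k{+}1\}$ respectively), so $R_j$ and $R_k$ modify disjoint sets of generators via words in generators untouched by the other, and they commute — mirroring $S_jS_k=S_kS_j$.

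The main obstacle, and the point needing genuine care rather than routine checking, is the \emph{well-definedness} bookkeeping: the elements $x^i_1,\dots,x^i_n$ need not be distinct, so a single generator of $X$ may receive several a priori different images under $R_j$, and one must confirm these coincide before speaking of $R_j$ as a map at all — but this is built into the statement as part of the definition of AMS (the clause "assume that $R_j$ is well defined"), so I would simply invoke it. The second subtlety is that I am using "$R_j$ acts on generators by words, and such actions are compatible with composition of endomorphisms", which is immediate for free algebraic systems but in general requires that $R_j$ \emph{be} an endomorphism — again precisely the AMS hypothesis. So once these two hypotheses are quoted, the proof reduces to transporting the identities $S_jS_{j+1}S_j=S_{j+1}S_jS_{j+1}$ and $S_jS_k=S_kS_j$ (which come for free from \eqref{YB} via Section~\ref{swrep}) across the coordinates-to-generators dictionary, and checking the two relation types generator-by-generator; I would present this as: "it suffices to check the images of $x^i_j, x^i_{j+1}, x^i_{j+2}$ (for relation \eqref{b1}) and of $x^i_j,x^i_{j+1},x^i_k,x^i_{k+1}$ (for relation \eqref{b2}), which is a direct computation identical to the one establishing the switch relations for $S$."
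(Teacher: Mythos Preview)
Your proposal is correct and follows essentially the same approach as the paper: relation~(\ref{b1}) is inherited from the Yang--Baxter equation for the switch $S$ on $Y=X\times X_1\times\dots\times X_m$, and relation~(\ref{b2}) follows because $R_j$ and $R_k$ act on disjoint columns of generators by words in those same columns. Your write-up is in fact more explicit than the paper's on the mechanism behind~(\ref{b1}) --- the paper simply asserts ``Relations~(\ref{b1}) follow from the fact that $S$ is an $(m+1)$-switch on $X$'' --- whereas you spell out the word-substitution dictionary that makes the transfer from $S_j\in\mathrm{Sym}(Y^n)$ to $R_j\in\mathrm{Aut}(X)$ work; for~(\ref{b2}) the paper gives exactly the disjoint-support argument you describe.
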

\begin{proof} We need to check that the automorphisms $R_1,R_2,\dots,R_{n-1}$ satisfy defining relations  of $B_n$. Relations (\ref{b1}) follow from the fact that $S$ is an $(m+1)$-switch on $X$. In order to check relations (\ref{b2}) let $1\leq j,k\leq n$ be such that $|j-k|\geq 2$. For the map $R_j$ we have
$$R_j(x_s^i)=\begin{cases}u_s^i,& s=j,j+1;i=0,1,\dots,m,\\
x_s^i, &s\neq j,j+1;i=0,1,\dots,m,
\end{cases}$$
where $u_s^i$ is a word over $\{x^i_{j},x^i_{j+1}~|~i=0,1,\dots,m\}$ in terms of operations of $X$. Since $R_k$ fixes elements $\{x^i_{j},x^i_{j+1}~|~i=0,1,\dots,m\}$, we have $R_k(u_s^i)=u_s^{i}$ for $s\in\{j,j+1\}$, $i=0,1,\dots,m$. Therefore
$$R_jR_k(x_s^i)=\begin{cases}u_s^i,& s=j,j+1;i=0,1,\dots,m,\\
v_s^i,& s=k,k+1;i=0,1,\dots,m,\\
x_s^i, &s\notin \{j,j+1,k,k+1\};i=0,1,\dots,m,
\end{cases}$$
where $v_s^i$ is a word over $\{x^i_{k},x^i_{k+1}~|~i=0,1,\dots,m\}$ in terms of operations of $X$. The last equality can be rewritten in the form
$$R_jR_k(x_s^i)=\begin{cases}R_j(x_s^i),& s=j,j+1;i=0,1,\dots,m,\\
R_k(x_s^i),& s=k,k+1;i=0,1,\dots,m,\\
x_s^i, &s\notin\{j,j+1,k,k+1\}; i=0,1,\dots,m.
\end{cases}$$
In a similar way we can prove that $R_kR_j(x_s^i)$ can be written in the same form, therefore $R_jR_k=R_kR_j$ for $|j-k|\geq 2$.
\end{proof}
Similar to Theorem~\ref{autrepr} result can be formulated for virtual $(m+1)$-switches and representations of virtual braid groups.  Let $S=(S_0,S_1,\dots,S_m)$, $V=(V_0,V_1,\dots,V_m)$ be a virtual $(m+1)$-switch on $X$ (i.~e. the pair $(S, V)$ is matched) such that
\begin{align}
\notag S_0=(S_0^l,S_0^r), V_0=(V_0^l,V_0^r)&:X^2 \times X_1^2 \times X_2^2 \times \dots \times X_m^2 \to X^2,\\
\notag S_i=(S_i^l,S_i^r), V_i=(V_i^l,V_i^r)&:X_i^2  \to X_i^2,~\text{for}~i = 1, 2, \dots, m,
\end{align}
and for $i=0,1,\dots,m$ the images of maps $S_i^l,S_i^r, V_i^l,V_i^r$  are words over its arguments in terms of operations of $X$. 
For $j=1,2,\dots,n-1$ denote by $R_j$ the map from $X_0\cup X_1\cup \dots \cup X_m$ to $X$ given by equality (\ref{fj}), and by $G_j$ the following map from $X_0\cup X_1\cup \dots \cup X_m$ to $X$.
\begin{align}
\label{gj} G_j :&\begin{cases}
x^0_{j} \mapsto V_0^l(x^0_{j}, x^0_{j+1}, x^1_{j}, x^1_{j+1}, \dots, x^m_{j}, x^m_{j+1}),\\
x^0_{j+1} \mapsto V_0^r(x^0_{j}, x^0_{j+1}, x^1_{j}, x^1_{j+1}, \dots, x^m_{j}, x^m_{j+1}),\\
x^1_{j} \mapsto V_1^l(x^1_{j}, x^1_{j+1}),\\
x^1_{j+1} \mapsto V_1^r(x^1_{j}, x^1_{j+1}), \\
~~\vdots\\
x^m_{j} \mapsto V_m^l(x^m_{j}, x^m_{j+1}),\\
x^m_{j+1} \mapsto V_m^r(x^m_{j}, x^m_{j+1}),
\end{cases}
\end{align}
and suppose that $R_j, G_j$ are well defined. If for $j=1,2,\dots,n-1$ the maps $R_j$, $G_j$ induce automorphisms of $X$, then we say that $(S,V)$ is \textit{an automorphic virtual multi-switch} (shortly, AVMS) or an automorphic virtual $(m+1)$-switch on $X$ with respect to the set of generators $\{x^i_{j}~|~i=0,1,\dots,m,j=1,2,\dots,n\}$.
\begin{theorem}\label{vautrepr}
Let $(S, V)$ be an AVMS on $X$
with respect to the set of generators $\{x^i_{j}~|~i=0,1,\dots,m,j=1,2,\dots,n\}$. Then the map
$$
\varphi_{S,V} : VB_n \to {\rm Aut}(X)
$$
which is defined on the generators of $VB_n$ as
\begin{align}
\notag\varphi_{S,V}(\sigma_j) = R_j,&&\varphi_{S,V}(\rho_j) = G_j,&&{\text for}~j = 1, 2, \dots, n-1,
\end{align}
where $R_j$, $G_j$ are defined by equalities (\ref{fj}), (\ref{gj}),
is a representation of $VB_n$.
\end{theorem}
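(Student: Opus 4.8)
The proof follows the pattern of Theorem~\ref{autrepr}. By the definition of an AVMS all the maps $R_1,\dots,R_{n-1},G_1,\dots,G_{n-1}$ induce automorphisms of $X$, so it remains to check that, as elements of ${\rm Aut}(X)$, they satisfy the defining relations (\ref{b1}), (\ref{b2}), (\ref{p1})--(\ref{m2}) of $VB_n$. Relations (\ref{b1}) and (\ref{b2}), which involve only the $R_j$, were verified in the proof of Theorem~\ref{autrepr} and use nothing about $V$; so only the five families of relations containing the $G_j$ have to be treated.

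For (\ref{p1}) and (\ref{p2}), note that since $(S,V)$ is a virtual $(m+1)$-switch the map $V=(V_0,\dots,V_m)$ is itself an $(m+1)$-switch on $X$, and the maps $G_j$ are built from $V$ by (\ref{gj}) in exactly the way the $R_j$ are built from $S$ by (\ref{fj}). Hence the argument of Theorem~\ref{autrepr} applied verbatim with $S$ replaced by $V$ gives $G_jG_{j+1}G_j=G_{j+1}G_jG_{j+1}$ and $G_jG_k=G_kG_j$ for $|j-k|\ge2$. Relation (\ref{m2}), $R_jG_k=G_kR_j$ for $|j-k|\ge2$, is proved as relation (\ref{b2}) was: the endomorphisms $R_j$ and $G_k$ move disjoint families of generators (those indexed by $j,j+1$ versus those indexed by $k,k+1$) and each fixes the generators moved by the other, so the two composites send every generator to the same word.

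The remaining two relations use the extra structure of a virtual multi-switch. For (\ref{p3}) we use that $V$ is involutive. The key point, which I will call the translation principle, is that since each of $V_0^l,V_0^r,V_i^l,V_i^r$ is a word over its arguments in terms of the operations of $X$ and $G_j$ is an endomorphism, the image of a generator indexed by $j$ or $j+1$ under $G_j^2$ is the corresponding coordinate of $V\circ V$ evaluated at the tuple $(x_j^0,x_{j+1}^0;x_j^1,x_{j+1}^1;\dots;x_j^m,x_{j+1}^m)$; since $V^2=id$ this coordinate is the generator itself, and $G_j^2$ fixes all other generators, so $G_j^2=id$. For (\ref{m1}), $G_{j+1}R_jG_{j+1}=G_jR_{j+1}G_j$, the same translation principle --- now in the overlapping form already used for (\ref{b1}) in Theorem~\ref{autrepr} --- shows that, on the generators indexed by $j,j+1,j+2$, the left-hand side is computed by the composite $(id\times V)(S\times id)(id\times V)$ acting on $(X\times X_1\times\dots\times X_m)^3$ and the right-hand side by $(V\times id)(id\times S)(V\times id)$; these maps coincide by the matched condition (\ref{VYB}), while on all other generators both sides act as the identity, so the two automorphisms agree.

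The step needing the most care is the translation principle in the overlapping cases (\ref{p1}) and (\ref{m1}): there $S_0$ and $V_0$ genuinely mix all $m+1$ components of two adjacent columns of generators, so one has to check that substituting $G_j$- and $R_j$-images into these words faithfully mirrors function composition of the switch maps on the product $(X\times X_1\times\dots\times X_m)^3$. This is exactly the bookkeeping underlying the verification of (\ref{b1}) in the proof of Theorem~\ref{autrepr}, so it can be invoked rather than redone.
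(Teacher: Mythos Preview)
Your proposal is correct and follows the same approach as the paper, which simply says ``The proof repeats the proof of Theorem~\ref{autrepr} adding a few details.'' You have in fact supplied those details---identifying which defining relation of $VB_n$ is handled by which property of the virtual multi-switch (Yang--Baxter for $S$ gives (\ref{b1}), Yang--Baxter for $V$ gives (\ref{p1}), involutivity of $V$ gives (\ref{p3}), the matched condition (\ref{VYB}) gives (\ref{m1}), and disjoint-support arguments give (\ref{b2}), (\ref{p2}), (\ref{m2}))---so your write-up is a faithful expansion of what the paper leaves implicit.
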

\begin{proof} The proof repeats the proof of Theorem~\ref{autrepr} adding a few details.
\end{proof}
A lot of known representations of the virtual braid group $VB_n$ by automorphisms of groups can be obtained using Theorem~\ref{vautrepr}. For example, the representation $\varphi_M$ given in equalities (\ref{barmnes}) is obtained in the following way: denote by $X=F_{n,2n+1}=F_n*\mathbb{Z}^{2n+1}$, $X_0=F_n=\langle x_1,x_2,\dots,x_n\rangle$, $X_1=\langle u_1,u_2,\dots,u_n\rangle$, $X_2=\langle v_1,v_2,\dots,v_n\rangle$, $X_3=\langle v_0\rangle$, and for $j=1,2,\dots,n$ denote by $x^0_j=x_j$, $x^1_j=u_j$, $x^2_j=v_j$, $x^3_j=v_0$. Then the representation $\varphi_M$ is the representation $\varphi_{S,V}$ for the virtual $4$-switch $(S,V)$ on $F_{n,2n+1}$ given by
\begin{align}
\notag S(a,b;x,y;p,q;r,s)&=(a b^x a^{-ry}, a^{s};y,x;q,p;s,r),\\ \notag V(a,b;x,y;p,q;r,s)&=(b^{p^{-1}}, a^q;y,x;q,p;s,r),
\end{align}
for $a,b\in X$, $x,y\in X_1$, $p,q\in X_2$, $r,s\in X_3$ (the fact that $(S,V)$ is a virtual $4$-switch on $F_{n,2n+1}$ follows from the fact that $\varphi_M$ is a representation of the virtual braid group).

The representation $\tilde{\varphi}_M:VB_n\to {\rm Aut}(F_{n}*\mathbb{Z}^n)$ given in equality (\ref{quvartin}), the Silver-Williams representation $\varphi_{SW}:VB_n\to {\rm Aut}(F_{n}*\mathbb{Z}^{n+1})$  (see \cite{SilWil}), the Boden-Dies-Gaudreau-Gerlings-Harper-Nicas representation $\varphi_{BD}:VB_n\to {\rm Aut}(F_{n}*\mathbb{Z}^2)$ (see~\cite{BDGGHN}), and the Kamada representation $\varphi_K:VB_n\to {\rm Aut}(F_{n}*\mathbb{Z}^{2n})$ (see~\cite{BN}) can be obtained using Theorem~\ref{vautrepr} in a similar to $\varphi_M$ way.

In the following subsection using Theorem~\ref{vautrepr} we construct a representations of the virtual braid groups by automorphisms of quandles.

\subsection{Representations by automorphisms of quandles}\label{quandlerepr} Let $X_0=FQ_n$ be the free quandle on $n$ generators $x^0_{1},x^0_2,\dots,x^0_{n}$, $X_1=T_n$ be the trivial quandle on $n$ elements $x^1_{1},x^1_2\dots,x^1_{n}$, and $X=X_0*X_1$ be the free product of $X_0$ and $X_1$. From Corollary~\ref{nweq} we know that the maps $S_{2Q},V_{2Q}:X^2\times X_1^{2}\to X^2\times X_1^{2}$ defined by
\begin{align}
\notag S_{2Q}(a,b;x,y)=(b*a,a;y,x),&&V_{2Q}(a,b;x,y)=(b*^{-1}x,a*y;y,x)
\end{align}
for $a,b\in X$, $x,y\in X_1$ form a virtual $2$-switch on $X$. Due to the fact that $X=X_0*X_1$ is a free product of the free quandle and the trivial quandle, it is easy to see that $(S,V)$ is an automorphic virtual $2$-switch on $X$ with respect to the set of generators $x^0_{1},x^0_2,\dots,x^0_{n}$, $x^1_{1},x^1_2,\dots,x^1_{n}$. So, Theorem~\ref{vautrepr} and Corollary~{\ref{nweq}} imply the following result.
\begin{theorem}\label{thrr}
 Let $FQ_{n}$ be the free quandle on the set of generators $\{x_1,x_2,\dots,x_n\}$ and $T_n=\{y_1,y_2,\dots,y_n\}$ be the trivial quandle. Then the map $\varphi_{2Q}$ given by
\begin{align}\label{quvartin}
\varphi_{2Q}(\sigma_i):\begin{cases}x_i\mapsto x_{i+1}*x_i,\\
x_{i+1} \mapsto x_i,\\
y_i \mapsto y_{i+1},\\
y_{i+1}\mapsto y_i,\\
\end{cases}&&
\varphi_{2Q}(\rho_i):\begin{cases}x_i \mapsto x_{i+1}*^{-1}y_i,\\
x_{i+1} \mapsto x_i*y_{i+1},\\
y_i \mapsto y_{i+1},\\
y_{i+1}\mapsto y_i,\\
\end{cases}
\end{align}
induces a homomorphism $VB_n\to{\rm Aut}(FQ_{n}*T_n)$.
\end{theorem}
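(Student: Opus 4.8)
The plan is to obtain this as a direct application of Theorem~\ref{vautrepr} to the virtual $2$-switch $(S_{2Q},V_{2Q})$ produced by Corollary~\ref{nweq}, so that the real work is to verify that this virtual $2$-switch is \emph{automorphic} with respect to the natural generating set. First I would set $X_0=FQ_n$ with free generators $x^0_1=x_1,\dots,x^0_n=x_n$, set $X_1=T_n$ with elements $x^1_1=y_1,\dots,x^1_n=y_n$, and put $X=X_0*X_1=FQ_n*T_n$. Conditions (1)--(3) preceding equality~(\ref{fj}) then hold trivially: the $2n$ chosen generators are pairwise distinct and they generate $X$. By Corollary~\ref{nweq}, $(S_{2Q},V_{2Q})$ is a virtual $2$-switch on $X$, and comparing the formulas for $S_{2Q},V_{2Q}$ with (\ref{fj}) and (\ref{gj}) shows that the maps $R_j,G_j$ attached to this $2$-switch are precisely $\varphi_{2Q}(\sigma_j)$ and $\varphi_{2Q}(\rho_j)$ from~(\ref{quvartin}); note also that the components of $S_{2Q},V_{2Q}$ ($b*a$, $a$, $b*^{-1}x$, $a*y$, $x$, $y$) are words over their arguments in the operations of $X$, as the general construction requires.

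Next I would check well-definedness. Since $X$ is the free product of the free quandle $X_0$ and the trivial quandle $X_1$ (see \cite{BarNas}), the only relations among the chosen generators are the trivial-quandle relations $y_k*y_l=y_k$. As $R_j$ and $G_j$ merely transpose $y_j$ and $y_{j+1}$ among the $y$'s, and $\{y_1,\dots,y_n\}$ generates a trivial subquandle of $X$, those relations are preserved; the images of the $x_i$ are unconstrained because $X_0$ is free. Hence $R_j$ and $G_j$ are well-defined quandle endomorphisms of $X$, and the ``well-defined'' caveat from the general construction is vacuous here since the generators are genuinely distinct.

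The only thing left to verify by hand is that each $R_j$ and each $G_j$ is invertible. For $R_j$ I would exhibit the explicit inverse endomorphism sending $x_j\mapsto x_{j+1}$, $x_{j+1}\mapsto x_j*^{-1}x_{j+1}$, and swapping $y_j\leftrightarrow y_{j+1}$, and confirm on generators that it composes with $R_j$ to the identity (using $a*^{-1}b=I_b^{-1}(a)$, so that $(x_{j+1}*x_j)*^{-1}x_j=x_{j+1}$). For $G_j$ I would instead check directly that $G_j^2$ fixes each of $x_j,x_{j+1},y_j,y_{j+1}$ — the computation reducing to $(x_j*y_{j+1})*^{-1}y_{j+1}=x_j$ and $(x_{j+1}*^{-1}y_j)*y_j=x_{j+1}$ — so that $G_j$ is an involutive automorphism. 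Together with the well-definedness step this makes $(S_{2Q},V_{2Q})$ an AVMS on $X$ with respect to $\{x_1,\dots,x_n,y_1,\dots,y_n\}$, and Theorem~\ref{vautrepr} then yields the homomorphism $\varphi_{2Q}:VB_n\to{\rm Aut}(FQ_n*T_n)$.

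I do not expect a genuine obstacle: because one factor of $X$ is free and the other is trivial, every map of generators of the required shape automatically extends to an endomorphism, and invertibility is witnessed by the explicit inverses above. The only point requiring mild care is the bookkeeping of the operations $*$ and $*^{-1}$ when composing $R_j,G_j$ with their inverses, together with the observation that the images of the $y$-generators always remain inside the trivial subquandle.
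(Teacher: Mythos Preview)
Your proposal is correct and follows essentially the same approach as the paper: set up $X=FQ_n*T_n$, invoke Corollary~\ref{nweq} to get the virtual $2$-switch $(S_{2Q},V_{2Q})$, check it is an AVMS with respect to the natural generators, and apply Theorem~\ref{vautrepr}. The only difference is that you spell out the AVMS verification (well-definedness and explicit inverses of $R_j,G_j$) in detail, whereas the paper simply asserts this is easy because $X$ is a free product of a free quandle and a trivial quandle.
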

\begin{proof}Denote by $x^0_{i}=x_i$, $x^1_{i}=y_i$ for $i=1,2,\dots,n$, $X_0=\langle x^0_{1},x^0_2,\dots,x^0_{n}\rangle=FQ_n$, $X_1=\langle x^1_{0},x^1_2,\dots,x^1_{n}\rangle=T_n$, $X=X_0*X_1$, and $(S_{2Q},V_{2Q})$ the virtual $2$-switch on $X$ defined in Corollary~\ref{nweq}. Then the map $\varphi_{2Q}$ described in the formulation of the theorem is the map $\varphi_{S_{2Q},V_{2Q}}$ which is a homomorphism  $VB_n\to {\rm Aut}(X)$ by Theorem~\ref{vautrepr}.
\end{proof}
The representation $\varphi_{2Q}$ given in Theorem~\ref{thrr} generalizes the representation $\tilde{\varphi}_M$ given by formulas (\ref{uvartin}). Note that $FQ_n*T_n$ has $2n$ generators, so, the representation given in Theorem~\ref{thrr} cannot be obtained using procedure described in Section~\ref{swrep}. The following proposition gives a representation of the virtual braid group $VB_n$ by automorphisms of the free quandle $FQ_{n+1}$ with $n+1$ generators.
\begin{prop}\label{ezthrr}
 Let $FQ_{n+1}$ be the free quandle with generators $\{x_1,x_2,\dots,x_n,y\}$. Then the map $\varphi$ given by
\begin{align}\label{automs}
\varphi(\sigma_i):\begin{cases}x_i\mapsto x_{i+1}*x_i,\\
x_{i+1} \mapsto x_i,
\end{cases}&&\varphi(\rho_i):\begin{cases}x_i \mapsto x_{i+1}*^{-1}y,\\
x_{i+1} \mapsto x_i*y
\end{cases}
\end{align}
induces a homomorphism $VB_n\to{\rm Aut}(FQ_{n+1})$.
\end{prop}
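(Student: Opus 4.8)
The plan is to deduce the statement from Theorem~\ref{vautrepr} applied to the virtual $2$-switch of Corollary~\ref{nweq}, using the ``collapsed'' generating set in which all of the trivial-quandle generators are taken equal to the single element $y$. Concretely, I would put $m=1$, $X=FQ_{n+1}$, take $X_0$ to be the subquandle of $FQ_{n+1}$ generated by $x_1,\dots,x_n$, and $X_1=\langle y\rangle$ to be the trivial subquandle on the single element $y$. Since the free quandle $FQ_{n+1}$ is the free product $FQ_n*T_1$ of the free quandle on $x_1,\dots,x_n$ with the one-element trivial quandle $T_1=\langle y\rangle$, Corollary~\ref{nweq} (with $Q=FQ_n$ and this $X_1$) shows that $(S_{2Q},V_{2Q})$ is a virtual $2$-switch on $X=FQ_{n+1}$. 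Now declare $x^0_j=x_j$ and $x^1_j=y$ for $j=1,2,\dots,n$: then $\{x^0_j,x^1_j\mid j=1,\dots,n\}$ equals $\{x_1,\dots,x_n,y\}$, which generates $FQ_{n+1}$, and $\{x_1,\dots,x_n\}\cap\{y\}=\varnothing$, so conditions (1)--(3) of the construction in Section~\ref{gencon} hold.

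The next step is to check that the maps $R_j,G_j$ of (\ref{fj}) and (\ref{gj}) attached to this data are well defined and coincide with the maps in (\ref{automs}). Writing $S_{2Q}=(S_0,S_1)$, $V_{2Q}=(V_0,V_1)$, the $X_1$-parts $S_1$ and $V_1$ are both the twist $(x,y)\mapsto(y,x)$ on $X_1$, so $S_1^l(y,y)=S_1^r(y,y)=y$ and $V_1^l(y,y)=V_1^r(y,y)=y$; hence the two prescriptions of $R_j$ (resp.\ $G_j$) on the now-coinciding generators $x^1_j=x^1_{j+1}=y$ agree, and $R_j,G_j$ fix $y$. Reading off (\ref{fj}) and (\ref{gj}) from $S_0^l(a,b;x,y)=b*a$, $S_0^r(a,b;x,y)=a$, $V_0^l(a,b;x,y)=b*^{-1}x$, $V_0^r(a,b;x,y)=a*y$ then gives $R_j: x_j\mapsto x_{j+1}*x_j$, $x_{j+1}\mapsto x_j$ and $G_j: x_j\mapsto x_{j+1}*^{-1}y$, $x_{j+1}\mapsto x_j*y$, with all remaining generators fixed --- exactly the formulas (\ref{automs}).

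Finally I would verify that $R_j$ and $G_j$ induce automorphisms of the free quandle $FQ_{n+1}$; this is the only place requiring a genuine (but short) computation. Since $FQ_{n+1}$ is free, each of $R_j,G_j$ extends to an endomorphism, so it suffices to produce two-sided inverses. For $G_j$ one checks, using that $I_y$ is a bijection, that $G_j^2$ fixes every generator --- e.g.\ $G_j^2(x_j)=G_j(x_{j+1}*^{-1}y)=(x_j*y)*^{-1}y=x_j$ and $G_j^2(x_{j+1})=G_j(x_j*y)=(x_{j+1}*^{-1}y)*y=x_{j+1}$ --- so $G_j$ is an involutive automorphism; for $R_j$ one checks that the endomorphism $x_j\mapsto x_{j+1}$, $x_{j+1}\mapsto x_j*^{-1}x_{j+1}$ (fixing the rest) is a two-sided inverse of $R_j$. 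Alternatively, since $FQ_{n+1}=FQ_n*T_1$ is a free product of a free quandle and a trivial quandle, the automorphism argument from the proof of Theorem~\ref{thrr} applies once the well-definedness just recorded is taken into account. With this, $(S_{2Q},V_{2Q})$ is an AVMS on $FQ_{n+1}$ with respect to $\{x_1,\dots,x_n,y\}$, and Theorem~\ref{vautrepr} yields a homomorphism $VB_n\to{\rm Aut}(FQ_{n+1})$ sending $\sigma_j\mapsto R_j$, $\rho_j\mapsto G_j$, which by the second paragraph is the map $\varphi$ of (\ref{automs}). I expect the only delicate point to be the well-definedness of the collapsed maps $R_j,G_j$; one could also organize the whole argument around the quandle epimorphism $FQ_n*T_n\to FQ_{n+1}$ collapsing $y_1,\dots,y_n$ to $y$, along which $\varphi_{2Q}$ from Theorem~\ref{thrr} descends.
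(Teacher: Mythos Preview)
Your proof is correct and follows essentially the same approach as the paper: set $x^0_j=x_j$, $x^1_j=y$, identify $FQ_{n+1}=FQ_n*T_1$, invoke Corollary~\ref{nweq} to get the virtual $2$-switch $(S_{2Q},V_{2Q})$, check that the resulting $R_j,G_j$ are the maps in (\ref{automs}) and are automorphisms, and then apply Theorem~\ref{vautrepr}. You supply more detail than the paper on well-definedness and on the explicit inverses of $R_j,G_j$, but the strategy is identical.
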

\begin{proof}Denote by $x^0_{i}=x_i$, $x^1_{i}=y$ for $i=1,2,\dots,n$, $X_0=\langle x^0_{1},x^0_2,\dots,x^0_{n}\rangle=FQ_n$, $X_1=\langle x^1_{0},x^1_1\dots,x^1_{n}\rangle=T_1=\{y\}$, $X=X_0*X_1=FQ_{n+1}$, $(S_{2Q},V_{2Q})$ the virtual $2$-switch on $X$ defined in Corollary~\ref{nweq}. Since $FQ_{n+1}$ is the free quandle, and the maps $\varphi(\sigma_i), \varphi(\rho_i)$ from equality (\ref{automs}) are invertible, $\varphi(\sigma_i), \varphi(\rho_i)$ induce automorphisms of $FQ_{n+1}$, therefore $(S_{2Q},V_{2Q})$ is AVMS with respect to the set of generators $\{x^0_1,x^0_1,\dots,x^0_n,x^1_1,x^1_2,\dots,x^1_n\}$ (the maps $\varphi(\sigma_i), \varphi(\rho_i)$ from (\ref{automs}) are exactly the maps $R_i$, $G_i$ from (\ref{fj}), (\ref{gj}) for $(S_{2Q},V_{2Q})$). Then the map $\varphi$ described in the formulation of the proposition is the map $\varphi_{S_{2Q},V_{2Q}}$ which is a representation $VB_n\to {\rm Aut}(X)$ by Theorem~\ref{vautrepr}.
\end{proof}
The representation given in Proposition~\ref{ezthrr} generalizes the representation $\psi$ introduced in \cite{Bar-1}.

\section{Multi-switches and linear representations of braid groups}\label{seclinrep}
The question of whether the group $B_n$ is linear was a long standing problem. In 1936 Burau  constructed a linear representation $\varphi_B:B_n\to{\rm GL}_n(\mathbb{Z}[t,t^{-1}])$ (see, for example, \cite[Section~3]{Bir}) which is given on the generators of $B_n$ by the following equality
$$\varphi_B(\sigma_i)={\rm I}_{i-1}\oplus
\begin{pmatrix}1-t&t\\
1&0\end{pmatrix}\oplus {\rm I}_{n-i-1}.$$
This representation can be obtained from the Burau switch (Example~\ref{bsw}) using procedure described in Section~\ref{swrep}. For $n=2,3$ the Burau representation in known to be faithful, for $n\geq 5$ the Burau representation is known to have non-trivial kernel \cite{Big}, for $n=4$ the question on faithfulness of the Burau representation is still open.

The Burau representation was generalized in 1961 by Gassner \cite{Gas} to a related representation of the pure braid group $\varphi_G:P_n\to {\rm GL}_n(\mathbb{Z}[t_1^{\pm1},t_2^{\pm1},\dots,t_n^{\pm1}])$. Let us recall the definition. Let $F_n$ be the free group with the free generators $x_1,x_2,\dots,x_n$. For an element $w\in F_n$ and the generator $x_i$ denote by $\partial w/\partial x_i$ the Fox derivative of $w$ by $x_i$ (see \cite[Section~3.1]{Bir}). The Artin representation $\varphi_A:B_n\to {\rm Aut}(F_n)$ of the braid group $B_n$ by automorphisms of $F_n$ is defined on the generators $\sigma_1,\sigma_2,\dots,\sigma_{n-1}$ of $B_n$ by the following rule
$$
\varphi_A(\sigma_i) :
\begin{cases}
  x_i \mapsto  x_i x_{i+1} x_i^{-1},\\
  x_{i+1} \mapsto x_i,\\
\end{cases}
$$
for $i=1,2,\dots,n-1$. Denote by $~^{ab}:F_n\to \mathbb{Z}^n$ the abelianization map, and let $x_i^{ab}=t_i$ for $i=1,2,\dots,n$. Denote by the same symbol $~^{ab}$ the induced homomorphism $\mathbb{Z}F_n\to \mathbb{Z}[t_1^{\pm1},t_2^{\pm1},\dots,t_n^{\pm1}]$. The homomorphism $\varphi_G:P_n\to {\rm GL}_n(\mathbb{Z}[t_1^{\pm1},t_2^{\pm1},\dots,t_n^{\pm1}])$ given on the generators
$$a_{i,j} = \sigma_{j-1} \sigma_{j-2} \dots \sigma_{i+1} \sigma_{i}^2 \sigma_{i+1}^{-1} \dots \sigma_{j-2}^{-1} \sigma_{j-1}^{-1},~~1\leq i<j\leq n$$
of $P_n$ by the rule
$$\varphi_G(a_{i,j})=\left(\left(\frac{\partial\varphi_A(a_{i,j})(x_r)}{\partial x_s}\right)^{ab}\right)_{r,s}$$
 is called the Gassner representation $\varphi_G:P_n\to {\rm GL}_n(\mathbb{Z}[t_1^{\pm1},t_2^{\pm1},\dots,t_n^{\pm1}])$. If we put $t_1=t_2=\dots=t_n=t$, then we obtain the restriction of the Burau representation to $P_n$. Denote by $e_1,e_2,\dots,e_n$ the natural basis in $(\mathbb{Z}[t_1^{\pm1},t_2^{\pm1},\dots,t_n^{\pm1}])^n$, then the Gassner representation is given by the formulas
\begin{align}\label{gassner}
\varphi_G(a_{i,j})e_k&=\begin{cases}e_k,&k<i,\\
(1-t_i+t_it_j)e_i
+t_i(1-t_i)e_j,&k=i\\
(1-t_{k})(1-t_j)e_i+e_k+(1-t_{k})(t_i-1)e_j,&i<k<j,\\
(1-t_j)e_i+t_ie_j,&k=j\\
e_k,&k>j,
\end{cases}
\end{align}
(see, for example, \cite{Knu} and \cite[Section~3.2]{Bir}).

It is not known whether the Gassner representation is faithful for $n>3$. Some results concerning the faithfulness of the Gassner representation can be found, for example, in \cite{Abd, Knu}.

The final answer to the question about the linearity of the braid groups was given independently by Bigelow \cite{Big2} and Krammer \cite{Kra} who proved that the representation $\varphi_{LKB}:B_n\to{\rm GL}_{n(n-1)/2}(\mathbb{Z}[q^{\pm1}, t^{\pm 1}])$ introduced by Lawrence in \cite{Law} is faithful. The representation $\varphi_{LKB}$ is called now \textit{the Lawrence-Krammer-Bigelow representation}.

The situation with virtual braid groups is more complicated. It is not difficult to prove that the braid group $VB_3$ on $3$ strands is linear.
\begin{prop}
 $VB_3$ is linear.
\end{prop}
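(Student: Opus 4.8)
The plan is to realise $VB_3$ as a finite extension of a group that is patently linear, and then invoke the standard fact that a group containing a finite-index linear subgroup is itself linear. The natural candidate for the finite-index subgroup is the virtual pure braid group $VP_3$.

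First I would use the exact sequence $1\to VP_3\to VB_3\to\Sigma_3\to 1$ coming from the homomorphism $\iota$, which exhibits $VP_3$ as a normal subgroup of index $|\Sigma_3|=6$; in fact $\langle\rho_1,\rho_2\rangle\cong\Sigma_3$ splits this sequence, so $VB_3\cong VP_3\rtimes\Sigma_3$. If $\psi\colon VP_3\hookrightarrow{\rm GL}_k(\mathbb{K})$ is a faithful representation over some field $\mathbb{K}$, then the induced representation ${\rm Ind}_{VP_3}^{VB_3}\psi\colon VB_3\to{\rm GL}_{6k}(\mathbb{K})$ is again faithful, since its restriction to $VP_3$ contains $\psi$ as a direct summand. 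Hence everything reduces to proving that $VP_3$ is linear.

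Next I would analyse $VP_3$ via its known finite presentation (due to Bardakov): it is generated by the six symbols $\lambda_{ij}$ with $1\le i\ne j\le 3$, and since $\{1,2,3\}$ contains no four distinct elements the far-commutation relations are vacuous, so the only defining relations are the triangle relations $\lambda_{ki}\lambda_{kj}\lambda_{ij}=\lambda_{ij}\lambda_{kj}\lambda_{ki}$ for $\{i,j,k\}=\{1,2,3\}$. From this presentation I would pin down the precise structure of $VP_3$ — for instance by examining the strand-forgetting epimorphism $VP_3\to VP_2$ (with $VP_2$ free of rank $2$) and describing its kernel via Reidemeister--Schreier — so as to present $VP_3$ as an iterated semidirect and/or free product of finitely generated free and free abelian groups. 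Each such building block is linear (free and free abelian groups embed in ${\rm GL}_n(\mathbb{Z})$, free products of finitely many finitely generated linear groups over a common infinite field are linear, and the semidirect products that occur can be modelled over a Laurent polynomial ring); together with the first step this proves the proposition.

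The crux is the middle step: extracting the exact isomorphism type of $VP_3$ from its presentation, equivalently producing a normal form and verifying faithfulness of whatever explicit matrix model one writes down for it. The induced-representation reduction and the linearity of the individual building blocks are formal, so all the genuine content sits in understanding $VP_3$. One could instead try to write a small faithful representation of $VB_3$ directly and check injectivity using a normal form in $VB_3\cong VP_3\rtimes\Sigma_3$, but this is no easier and loses the guidance provided by the triangle relations.
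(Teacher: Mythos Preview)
Your reduction to a finite-index normal subgroup, followed by an induced-representation argument, is exactly the paper's opening move. The gap is the step you yourself flag as ``the crux'': you never actually determine the structure of $VP_3$ or exhibit it as a product of free pieces, and nothing in the proposal guarantees that the strand-forgetting/Reidemeister--Schreier route will yield a tractable description. As written, the proposal is a plan whose decisive step is left blank.

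The paper fills this gap by passing to a \emph{different} index-$6$ normal subgroup. Rather than $\iota$ (which sends both $\sigma_i$ and $\rho_i$ to $\tau_i$), it uses the surjection $VB_3\to\Sigma_3$ with $\sigma_i\mapsto 1$ and $\rho_i\mapsto\tau_i$; the kernel $H_3$ contains all of $B_3$ and is not $VP_3$. A presentation of $H_3$ (Rabenda) has six generators $x_{ij}$ ($i\ne j$) and braid-type relations $x_{ik}x_{kj}x_{ik}=x_{kj}x_{ik}x_{kj}$. Each such relation involves only the two generators $x_{ik},x_{kj}$, whose index pairs lie on the same $3$-cycle $i\to k\to j$; hence the six relations split into two disjoint triples and $H_3\cong G_1*G_2$ with $G_1=\langle x_{12},x_{23},x_{31}\rangle$, $G_2=\langle x_{13},x_{32},x_{21}\rangle$. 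Each $G_i$ is the circular (annular) braid group on three strands, which embeds in $B_4$ and is therefore linear by Bigelow--Krammer; a free product of linear groups is linear, and the proof is complete.

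By contrast, the defining relations of $VP_3$ have the shape $\lambda_{ki}\lambda_{kj}\lambda_{ij}=\lambda_{ij}\lambda_{kj}\lambda_{ki}$, each involving \emph{three} generators, and they do not partition the generating set: e.g.\ $\lambda_{13}\lambda_{12}\lambda_{32}=\lambda_{32}\lambda_{12}\lambda_{13}$ mixes ``increasing'' and ``decreasing'' pairs. So no free-product splitting of the $H_3$ kind is available, and carrying your outline through would require genuinely new structural work on $VP_3$ that you have not supplied. The missing idea is precisely the choice of $H_3$ over $VP_3$.
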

\begin{proof}
 The kernel of of the endomorphism $VB_3 \to \Sigma_3$, which is defined on the generators $\sigma_1,\sigma_2,\rho_1,\rho_2$ of $VB_3$ by the rule
\begin{align}
\sigma_1,\sigma_2 \mapsto 1,&&\rho_1 \mapsto (1,2),&&\rho_2\mapsto (2,3)
\end{align}
is a normal subgroup $H_3$ of index $6$ in $VB_3$. So, $VB_3$ is linear if and only if $H_3$ is linear and it is enough to prove that $H_3$ is linear. The presentation of $H_3$ by generators and defining relations was found in master thesis of Rabenda \cite{R} (see also \cite{BB}). This group is generated by the elements
$$
x_{12} = \sigma_1, ~~x_{23} = \sigma_2,~~ x_{21} = \rho_1 \sigma_1 \rho_1,~~ x_{32} = \rho_2 \sigma_2 \rho_2,~~ x_{13} = \rho_2 \sigma_1 \rho_2,~~ x_{31} = \rho_2 \rho_1 \sigma_1 \rho_1 \rho_2,
$$
and is defined by the relations
\begin{equation} \label{eq41}
x_{i,k} \,  x_{k,j} \,  x_{i,k} =  x_{k,j} \,  x_{i,k} \, x_{k,j},
\end{equation}
where  distinct letters stand for distinct indices. From this presentation follows (see \cite[Remark~20]{BB}) that $H_3 = G_1 * G_2$ is a free product of groups
$$
G_1 = \langle x_{12}, x_{23}, x_{31} \rangle,~~~G_2 = \langle x_{13}, x_{32}, x_{21} \rangle.
$$
The groups $G_1,G_2$ are isomorphic to the circular braid group on $3$ strands \cite{A}, which can be embedded into the braid group $B_4$ on $4$ strands \cite{KenPei}. Since $B_4$ is linear, the groups $G_1, G_2$ are linear.  Since the free product of two linear groups is linear \cite{Nis}, we conclude that $H_3$ is linear, and therefore $VB_3$ is linear.
\end{proof}

In \cite{Bar-2} the Gassner representation was extended to the group $Cb_n$ of basis-conjugating automorphisms, which is isomorphic to the pure welded braid group $WP_n$. This extension is not faithful for $n \geq 2$. In \cite{Ver} the Burau representation was extended to the welded braid group  $WP_n$. The question about the linearity of $VP_n$ for $n\geq4$ is formulated in \cite[Problem~19.7(b)]{kt}. In general, it is not known if the virtual braid groups  are linear or not. Also, there are no good linear representation of these groups.

In Section~\ref{gencon} we introduced a general construction how a (virtual) multi-switch on an algebraic system $X$ with finitely many generators can be used for constructing a representation of the (virtual) braid group by automorphisms of $X$ (Theorem~\ref{autrepr} and Theorem~\ref{vautrepr}). Sometimes using the same approach it is possible to construct representations of (virtual) braid groups by automorphisms of infinitely generated algebraic systems. In this section we introduce representations of (virtual) braid groups by automorphisms of some infinitely generated abelian groups. These representations lead to the linear representations of pure (virtual) braid groups which are strongly related with the Burau representation $\varphi_B$ and the Gassner representation $\varphi_G$

\subsection{Representations of $B_n$ and $P_n$} Let $n\geq 2$ be a positive integer, and $M$ be the free left module with the free basis $e_1, e_2, \dots, e_n$ over the ring $K = \mathbb{Z}[t_1^{\pm 1},t_2^{\pm 1}, \dots, t_n^{\pm 1}]$. Denote by $X$ the additive group of $M$, by $X_0$ the subgroup of $X$ generated by $e_1,e_2,\dots,e_n$, and by $X_1=\{t_1,t_2,\dots,t_n\}$ (similarly to Proposition~\ref{Burgen} we can assume that $X_1\subset X$). Let $S_{2B}$ be the $2$-switch on $X$ from Proposition~\ref{Burgen}
$$
S_{2B}(a, b; x, y) = ((1 - y) a + x b, a; y, x)
$$
for $a,b\in X$, $x,y\in X_1$. For $j=1,2,\dots,n-1$ denote by $R_j$ the following map from $\{e_1,e_2,\dots,e_n,t_1,t_2,\dots,t_n\}$ to $X$
\begin{equation}\label{trylin}
R_j :\begin{cases}
 e_j \mapsto  (1 - t_{j+1}) e_j + t_j e_{j+1}, \\
  e_{j+1} \mapsto e_j,  \\
t_j \mapsto  t_{j+1}, \\
t_{j+1} \mapsto t_j.
\end{cases}
\end{equation}
Note that the map $R_j$ from (\ref{trylin}) is the same as the map $R_j$ from equality (\ref{fj}) if we denote by $x^0_i=e_i$, $x^1_i=t_i$ for $i=1,2,\dots,n$. The map $R_j$ induces an automorphism of $X$ by the rule
\begin{equation}\label{ext}R_j\left(\sum_{k=1}^n\alpha_ke_k\right)=\sum_{k=1}^nR_j(\alpha_k)R_j(e_k),
\end{equation}
where $\alpha_1,\alpha_2,\dots,\alpha_n\in K$, and $R_j(\alpha_k)$ denotes the image of $\alpha_k$ under the map $K\to K$ given by permutation $(t_j,~t_{j+1})$ induced by $R_j$. Denote by $\varphi_{2B}$ the map from the set of generators $\{\sigma_1,\sigma_2,\dots,\sigma_{n-1}\}$ of $B_n$ to ${\rm Aut}(X)$ which maps $\sigma_j$ to $R_j$ for $j=1,2,\dots,n-1$.
\begin{theorem}\label{glin}The map $\varphi_{2B} : \{\sigma_1,\sigma_2,\dots,\sigma_{n-1}\} \to {\rm Aut}(X)$ induces a representation of the braid group $B_n$. The restriction of $\varphi_{2B}$ to the pure braid group $P_n$ is a linear representation $\varphi_{2B}:P_n \to {\rm GL}_n(K)$, which coincides with the Gassner representation~$\varphi_G$.
\end{theorem}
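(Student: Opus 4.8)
The plan is to treat $\varphi_{2B}$ as a representation by \emph{semilinear} automorphisms of $X=M$ and then specialise to $P_n$. Write $\tau_j\in\mathrm{Aut}(K)$ for the ring automorphism interchanging $t_j$ and $t_{j+1}$ and fixing the remaining variables. By construction (\ref{ext}), each $R_j$ is additive and satisfies $R_j(\lambda v)=\tau_j(\lambda)R_j(v)$ for $\lambda\in K$, $v\in X$; on the basis it acts through the block $\left(\begin{smallmatrix}1-t_{j+1}&1\\ t_j&0\end{smallmatrix}\right)$ in positions $j,j+1$ and the identity elsewhere, and since $\det\left(\begin{smallmatrix}1-t_{j+1}&1\\ t_j&0\end{smallmatrix}\right)=-t_j\in K^{\times}$ this block is invertible over $K$, so $R_j$ is a bijection of $X$ whose inverse is again $\tau_j$-semilinear. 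In particular each $R_j$ lies in $\mathrm{Aut}(X)$. Note that Theorem~\ref{autrepr} cannot be invoked verbatim here, because $X$ is not finitely generated as an abelian group; its role is played by the observation that a $\tau$-semilinear endomorphism of the free module $M$ is determined by $\tau$ together with its values on $e_1,\dots,e_n$.

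First I would check that $R_1,\dots,R_{n-1}$ satisfy the defining relations of $B_n$. Relation (\ref{b2}) is immediate, since for $|j-k|\ge2$ the maps $R_j,R_k$ involve disjoint sets of $e$'s and of $t$'s and hence commute. For relation (\ref{b1}), both $R_iR_{i+1}R_i$ and $R_{i+1}R_iR_{i+1}$ are semilinear with the \emph{same} twist — the transposition of $t_i$ and $t_{i+2}$ — because $\tau_i\tau_{i+1}\tau_i=\tau_{i+1}\tau_i\tau_{i+1}$; by the determination remark it therefore suffices to check that they agree on $e_1,\dots,e_n$, and this is a short computation confined to $e_i,e_{i+1},e_{i+2}$ (the multivariable analog of the braid identity for the Burau matrices — essentially the computation proving the Yang--Baxter equality of Proposition~\ref{Burgen}, now with the $t$-variables carried by the semilinear twist rather than passively). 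Hence $\varphi_{2B}\colon B_n\to\mathrm{Aut}(X)$ is a well-defined representation.

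Next I would locate the image of $P_n$ and identify it with $\varphi_G$. Sending a semilinear automorphism of $M$ to its twist is a homomorphism $\mathrm{Im}(\varphi_{2B})\to\mathrm{Aut}(K)$ carrying $R_j$ to $\tau_j$; the resulting composite $B_n\to\mathrm{Aut}(K)$ permutes the subscripts of the $t$'s by $\sigma_j\mapsto(j,\,j+1)$, so it coincides with $\iota$. Consequently every $\varphi_{2B}(\beta)$ with $\beta\in P_n=\ker\iota$ has trivial twist, i.e.\ is $K$-linear, hence — being an automorphism of the free rank-$n$ module $M$ — an element of $\mathrm{GL}_n(K)$; this produces the linear representation $\varphi_{2B}\colon P_n\to\mathrm{GL}_n(K)$. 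To see it is $\varphi_G$, I would identify $\varphi_{2B}$ with the Magnus-type representation of the Artin action: for $\beta\in B_n$ let $\Psi(\beta)$ be the semilinear automorphism of $M$ acting by $e_k\mapsto\sum_s\bigl(\partial\varphi_A(\beta)(x_k)/\partial x_s\bigr)^{ab}e_s$ with twist $\iota(\beta)$. By the Fox chain rule $\Psi$ is a homomorphism $B_n\to\mathrm{Aut}(X)$ (see \cite[Section~3]{Bir}), and $\Psi|_{P_n}$ is the Gassner representation $\varphi_G$ by definition. Since $\Psi$ and $\varphi_{2B}$ are both homomorphisms, it suffices to see $\Psi(\sigma_j)=R_j$, which comes down to $\partial(x_jx_{j+1}x_j^{-1})/\partial x_j=1-x_jx_{j+1}x_j^{-1}$, $\partial(x_jx_{j+1}x_j^{-1})/\partial x_{j+1}=x_j$, $\partial x_j/\partial x_j=1$, whose abelianisations $1-t_{j+1},\,t_j,\,1$ are exactly the nontrivial entries of $R_j$. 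Alternatively one can avoid $\Psi$ and compute $\varphi_{2B}(a_{i,j})=R_{j-1}\cdots R_{i+1}R_i^{\,2}R_{i+1}^{-1}\cdots R_{j-1}^{-1}$ by induction on $j-i$, tracking how the factors permute $t_i,\dots,t_j$ (the net permutation being trivial, as it must for $a_{i,j}\in P_n$); already the base case $a_{i,i+1}=\sigma_i^2$ gives $R_i^2(e_i)=(1-t_i+t_it_{i+1})e_i+t_i(1-t_i)e_{i+1}$ and $R_i^2(e_{i+1})=(1-t_{i+1})e_i+t_ie_{i+1}$, matching (\ref{gassner}).

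The step I expect to be the real work is this last identification with $\varphi_G$: either pinning down the homomorphism property of the Magnus representation $\Psi$ in the exact right-action form needed and checking $\Psi(\sigma_j)=R_j$ against the paper's conventions for $\varphi_G$, or the inductive bookkeeping of the $t$-permutations inside the product $R_{j-1}\cdots R_{j-1}^{-1}$. Everything before that — the semilinearity and invertibility of the $R_j$, relation (\ref{b2}), and the twist computation placing $\varphi_{2B}(P_n)$ inside $\mathrm{GL}_n(K)$ — is formal.
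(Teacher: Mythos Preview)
Your proposal is correct. The first two stages --- checking the braid relations via the semilinear framework and reading off that $P_n$ acts $K$-linearly from the twist homomorphism $B_n\to\mathrm{Aut}(K)$ --- are exactly what the paper does, only phrased more crisply (the paper just says the braid relations ``can be checked by direct calculations'' and observes that $\varphi_{2B}(\sigma_j^2)$ fixes all $t_k$).

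Where you genuinely diverge is in identifying $\varphi_{2B}|_{P_n}$ with $\varphi_G$. The paper does this by an explicit induction on $j-i$: it computes $R_i^2$ for the base case, writes down $R_j^{-1}$, and then evaluates $R_j\,\varphi_G(a_{i,j})\,R_j^{-1}$ on each basis vector $e_k$ to match the closed form~(\ref{gassner}) for $\varphi_G(a_{i,j+1})$. Your primary route instead introduces the semilinear Magnus representation $\Psi$ built from abelianised Fox Jacobians, checks $\Psi(\sigma_j)=R_j$ by a one-line derivative computation, and concludes $\varphi_{2B}=\Psi$, so that $\varphi_{2B}|_{P_n}=\varphi_G$ drops out of the very definition of $\varphi_G$. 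This is shorter and more conceptual, and it explains \emph{why} the two representations agree rather than verifying it case by case; the cost is that you must justify that $\Psi$ is a homomorphism, which needs the Fox chain rule together with the compatibility of abelianisation with the $\iota(\beta)$-twist (you gesture at this, and it does hold, but it is the one place where the conventions --- the paper's right-action convention $fg(x)=g(f(x))$ --- have to be tracked carefully). The paper's induction, by contrast, is entirely self-contained once (\ref{gassner}) is on the table. Your ``alternative'' sketch at the end is precisely the paper's method, so you have in fact anticipated both approaches.
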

\begin{proof} In order to prove that the map $\varphi_{2B}$ induces a representation of the braid group $B_n$ it is necessary to check that the maps $R_1,R_2,\dots,R_{n-1}$ satisfy the defining relations of $B_n$. It can be checked by (a bit massive but not difficult) direct calculations in a similar to Theorem~\ref{autrepr} way, and we will not do it here.

Since $\varphi_{2B}(\sigma_j)=R_j$ permutes $t_j$ and $t_{j+1}$ and fixes $t_i$ for $i\notin\{ j,j+1\}$, it is clear that $\varphi_{2B}(\sigma_j^2)=R_j^2$ fixes all $t_1,t_2,\dots,t_n$. Hence for $1 \leq i < j \leq n$ the automorphism $\varphi_{2B}(a_{i,j})$, where
\begin{equation}\label{puregen}
a_{i,j} = \sigma_{j-1} \sigma_{j-2} \dots \sigma_{i+1} \sigma_{i}^2 \sigma_{i+1}^{-1} \dots \sigma_{j-2}^{-1} \sigma_{j-1}^{-1}
\end{equation}
is the generator of $P_n$, fixes all $t_1,t_2,\dots,t_n$. Therefore
$$\varphi_{2Q}(a_{i,j})\left(\sum_{k=1}^n\alpha_ke_k\right)=\sum_{k=1}^n\alpha_k\varphi_{2Q}(a_{i,j})(e_k),$$
i.~e. the restriction of $\varphi_{2Q}$ to $P_n$ gives a representation $B_n\to {\rm Aut}(M)={\rm GL}_n(K)$ (here we write $M$ instead of $X$ in order to underline that $M$ is a module, while $X$ is an abelian group).

In order to prove that the restriction of the representation $\varphi_{2B}$ to $P_n$ coincides with the Gassner representation it is enough to prove that $\varphi_{2B}(a_{i,j})=\varphi_G(a_{i,j})$ for all $1 \leq i < j \leq n$, where $a_{i,j}$ is the generator of $P_n$ given by equality (\ref{puregen}). We will prove this fact using induction on $j-i$ and equalities (\ref{gassner}).

The basis of induction ($j=i+1$) is simple. From equality (\ref{trylin}) follows that
$$\varphi_{2B}(a_{i,i+1})=\varphi_{2B}(\sigma_i^2)=R_i^2 :\begin{cases}
 e_i \mapsto  (1 - t_{i}+t_it_{i+1}) e_i + t_i(1-t_i) e_{i+1}, \\
e_{i+1} \mapsto (1-t_{i+1})e_i+t_ie_{i+1}.
\end{cases}$$
Comparing the last equality with equality~(\ref{gassner}) we see that $\varphi_{2B}(a_{i,i+1})=\varphi_G(a_{i,i+1})$, and the basis of induction is proved.

In order to prove the induction step note that $R_j^{-1}$ acts by the following rule
\begin{equation}\label{trylininv}
R_j^{-1} :\begin{cases}
 e_j\mapsto e_{j+1}, \\
  e_{j+1} \mapsto t_{j+1}^{-1}e_j+t_{j+1}^{-1}(t_j-1)e_{j+1},  \\
t_j \mapsto  t_{j+1}, \\
t_{j+1} \mapsto t_j.
\end{cases}
\end{equation}
Suppose that we proved that $\varphi_{2B}(a_{i,j})=\varphi_G(a_{i,j})$ for $1\leq i<j\leq n$, and let us prove that $\varphi_{2B}(a_{i,j+1})=\varphi_G(a_{i,j+1})$. By the induction conjecture we have
\begin{equation}\label{step}\varphi_{2B}(a_{i,j+1})=\varphi_{2B}(\sigma_ja_{i,j}\sigma_{j}^{-1})=\varphi_{2B}(\sigma_j)\varphi_{2B}(a_{i,j})\varphi_{2B}(\sigma_j^{-1})=R_j\varphi_{G}(a_{i,j})R_j^{-1}.
\end{equation}
Using induction conjecture let us calculate the images $R_j\varphi_G(a_{i,j})R_j^{-1}(e_k)$ for all $k=1,2,\dots,n$. If $k<i$ or $k>j+1$, then it is clear that
\begin{equation}\label{gstep1}
R_j\varphi_G(a_{i,j})R_j^{-1}(e_k)=e_k
\end{equation}
since both $R_j$ and $\varphi_G(a_{i,j})$ fix $e_k$. For $k=i$ we have
\begin{align}
\notag R_j\varphi_G(a_{i,j})R_j^{-1}(e_i)&=\varphi_G(a_{i,j})R_j^{-1}(e_i)\\
\notag&=R_j^{-1}((1-t_i+t_it_j)e_i+t_i(1-t_i)e_j)\\
\label{gstep2}&=(1-t_i+t_it_{j+1})e_i+t_i(1-t_i)e_{j+1}.
\end{align}
For $i<k<j$ we have
\begin{align}
\notag R_j\varphi_G(a_{i,j})R_j^{-1}(e_k)&=\varphi_G(a_{i,j})R_j^{-1}(e_k)\\
\notag&=R_j^{-1}((1-t_{k})(1-t_j)e_i+e_k+(1-t_{k})(t_i-1)e_j)\\
\label{gstep4}&=(1-t_{k})(1-t_{j+1})e_i+e_k+(1-t_{k})(t_i-1)e_{j+1}.
\end{align}
For $k=j$ we have
\begin{align}
\notag R_j\varphi_G(a_{i,j})R_j^{-1}(e_j)&=\varphi_G(a_{i,j})R_j^{-1}((1-t_{j+1})e_j+t_je_{j+1})\\
\notag&=R_j^{-1}((1-t_{j+1})((1-t_j)e_i+t_ie_j)+t_je_{j+1})\\
\notag&=(1-t_{j})((1-t_{j+1})e_i+t_ie_{j+1})+t_{j+1}(t_{j+1}^{-1}e_{j}+t_{j+1}^{-1}(t_j-1))e_{j+1}\\
\label{gstep3}&=(1-t_{j})(1-t_{j+1})e_i+e_j+(1-t_j)(t_i-1)e_{j+1}.
\end{align}
Note that equalities (\ref{gstep4}) and (\ref{gstep3})  coincide. Finally, for $k=j+1$ we have
\begin{align}
\notag R_j\varphi_G(a_{i,j})R_j^{-1}(e_{j+1})&=\varphi_G(a_{i,j})R_j^{-1}(e_{j})\\
\notag &=R_j^{-1}((1-t_j)e_i+t_ie_j)\\
\label{gstep5} &=(1-t_{j+1})e_i+t_ie_{j+1}.
\end{align}
Comparing equalities (\ref{gstep1}), (\ref{gstep2}), (\ref{gstep4}), (\ref{gstep3}), (\ref{gstep5}) with equality (\ref{gassner}) we see that the equality $R_j\varphi_G(a_{i,j})R_j^{-1}(e_k)=\varphi_G(a_{i,j+1})$ holds for all $k=1,2,\dots,n$. From equality~(\ref{step}) follows that $\varphi_G(a_{i,j+1})=\varphi_{2B}(a_{i,j+1})$, so, the induction step is proved.
\end{proof}

Note that if we put $t_1=t_2=\dots=t_n=t$, then the representation $\varphi_{2B}$ is the Burau representation $\varphi_B$. So, the representation $\varphi_{2B}$ extends both the Burau and the Gassner representations.

Despite the fact that the map $R_j$ from (\ref{trylin}) has the same form as the map $R_j$ from~(\ref{fj}), Theorem~\ref{glin} does not follow from Theorem~\ref{autrepr}: the elements $e_1,e_2,\dots,e_n$, $t_1,t_2,\dots,t_n$ do not form the generating set of $X$ (remember that we think about the set $\{t_1,t_2,\dots,t_n\}$ as about a subset of $X$), and in order to extend the map $R_j$ to the automorphism of $X$ we need to use formula (\ref{ext}). At the same time, in Theorem~\ref{autrepr} the map $R_j$ was extended to the automorphism of $X$ just by action on the generators.

\subsection{Representations of $VB_n$ and $VP_n$}
The representation of the braid group
$$\varphi_{2B}:B_n\to{\rm Aut}\left(\left(\mathbb{Z}[t_1^{\pm1},t_2^{\pm1},\dots,t_n^{\pm1}]\right)^n\right)$$
from Theorem~\ref{glin} can be extended to the representation of the virtual braid group using the virtual $3$-switch $(S_{3B},V_{3B})$ instead of the $2$-switch $S_{2B}$. Let $n\geq 2$ be a positive integer, and $M$ be the free left module with the free basis $e_1, e_2, \ldots, e_n$ over the ring $K = \mathbb{Z}[t_1^{\pm 1},t_2^{\pm 1}, \dots, t_n^{\pm 1},q_1^{\pm 1},q_2^{\pm 1}, \dots, q_n^{\pm 1}]$. Denote by $X$ the additive group of $M$, by $X_0$ the subgroup of $X$ generated by $e_1,e_2,\dots,e_n$, by $X_1=\{t_1,t_2,\dots,t_n\}$, and by $X_2=\{q_1,q_2,\dots,q_n\}$. Let $(S_{3B}, V_{3B})$ be the virtual $3$-switch on $X$ from Proposition~\ref{Burgen3}
\begin{align}
\notag S_{3B}(a,b;x,y;p,q)&=((1 - y) a + x b, a; y, x;q,p),\\
\notag V_{3B}(a,b;x,y;p,q)&=(pb,q^{-1}a;y,x;q,p)
\end{align}
for $a,b\in X$, $x,y\in X_1$, $p,q\in X_2$. For $j=1,2,\dots,n-1$ denote by $R_j$, $G_j$ the following maps from $\{e_1,e_2,\dots,e_n,t_1,t_2,\dots,t_n,q_1,q_2,\dots,q_n\}$ to $X$
\begin{align}\label{vtrylin}
R_j :\begin{cases}
 e_j \mapsto  (1 - t_{j+1}) e_j + t_j e_{j+1}, \\
  e_{j+1} \mapsto e_j,  \\
t_j \mapsto  t_{j+1}, \\
t_{j+1} \mapsto t_j,\\
q_j\mapsto q_{j+1},\\
q_{j+1}\mapsto q_j,
\end{cases}&&
G_j :\begin{cases}
 e_j \mapsto  q_je_{j+1}, \\
  e_{j+1} \mapsto q_{j+1}^{-1}e_j,  \\
t_j \mapsto  t_{j+1}, \\
t_{j+1} \mapsto t_j,\\
q_j\mapsto q_{j+1},\\
q_{j+1}\mapsto q_j,
\end{cases}
\end{align}
The maps $R_j$, $G_j$ from (\ref{vtrylin}) are the same as the maps $R_j$, $G_j$ from equalities (\ref{fj}), (\ref{gj}) if we denote by $x^0_i=e_i$, $x^1_i=t_i$, $x^2_i=q_i$ for $i=1,2,\dots,n$. Similarly to equality (\ref{ext}) the maps $R_j$, $G_j$ induce automorphisms of $X$ by the rule
\begin{align}
\notag R_j\left(\sum_{k=1}^n\alpha_ke_k\right)=\sum_{k=1}^nR_j(\alpha_k)R_j(e_k),&&
\notag G_j\left(\sum_{k=1}^n\alpha_ke_k\right)=\sum_{k=1}^nG_j(\alpha_k)G_j(e_k),
\end{align}
where $\alpha_1,\alpha_2,\dots,\alpha_n\in K$, and $R_j(\alpha_k)=G_j(\alpha_k)$ is the image of $\alpha_k$ given by permutation $(t_j,~t_{j+1})(q_j,~q_{j+1})$ induced by $R_j, G_j$. Denote by $\varphi_{3B}$ the map from the set of generators $\{\sigma_1,\sigma_2,\dots,\sigma_{n-1},\rho_1,\rho_2,\dots,\rho_{n-1}\}$ of $VB_n$ to ${\rm Aut}(X)$ which maps $\sigma_j,\rho_j$ to $R_j$, $G_j$, respectively, for $j=1,2,\dots,n-1$.
\begin{theorem}\label{vvvglin}The map $\varphi_{3B} : \{\sigma_1,\sigma_2,\dots,\sigma_{n-1},\rho_1,\rho_2,\dots,\rho_{n-1}\} \to {\rm Aut}(X)$ induces a representation of the virtual braid group $VB_n$. The restriction of $\varphi_{3B}$ to the pure virtual braid group $VP_n$ is a linear representation $\varphi_{3B}:VP_n \to {\rm GL}_n(K)$.
\end{theorem}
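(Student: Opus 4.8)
The plan is to mimic the proof of Theorem~\ref{glin}, replacing the $2$-switch $S_{2B}$ of Proposition~\ref{Burgen} by the virtual $3$-switch $(S_{3B},V_{3B})$ of Proposition~\ref{Burgen3}. As in the setting of Theorem~\ref{glin}, the elements $e_1,\dots,e_n,t_1,\dots,t_n,q_1,\dots,q_n$ do \emph{not} generate the abelian group $X$, and $R_j,G_j$ are extended to $\mathrm{Aut}(X)$ by the semilinear rule displayed just before the statement; for this reason Theorem~\ref{vautrepr} cannot be quoted directly, and the defining relations of $VB_n$ have to be checked for the extended maps. So the first thing I would do is record the two structural features that make this routine: each of $R_j,G_j$ acts on the scalar variables by the transposition $(t_j,~t_{j+1})(q_j,~q_{j+1})$ and modifies only the basis vectors $e_j,e_{j+1}$ by an explicit formula, fixing every other $e_k$.

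With this in hand, relations~(\ref{b2}), (\ref{p2}), (\ref{m2}) (far commutativity) follow exactly as in the proof of Theorem~\ref{autrepr}: when $|j-k|\ge 2$, the maps attached to $j$ and to $k$ act on disjoint sets of scalar variables and on disjoint pairs of basis vectors, hence commute. The braid relations~(\ref{b1}) among the $R_j$ are precisely the content of the first part of Theorem~\ref{glin} (the variables $q_i$ play no role there, so that proof applies verbatim). Thus it remains to verify $G_j^2=\mathrm{id}$, i.e.\ (\ref{p3}); the relation~(\ref{p1}) among the $G_j$; and the mixed relation~(\ref{m1}).

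Relation $G_j^2=\mathrm{id}$ is immediate from the formulas for $G_j$ (it is the image, after tracking the scalar action, of the identity $V_{3B}^2=\mathrm{id}$ verified in Proposition~\ref{Burgen3}). For~(\ref{p1}) and~(\ref{m1}) I would compute the images of $e_j,e_{j+1},e_{j+2}$ under both sides; every other basis vector is fixed by all maps involved, so these three suffice. These computations are the ``upgrades'' of the equalities $V_1V_2V_1=V_2V_1V_2$ and $V_2S_1V_2=V_1S_2V_1$ proved inside Proposition~\ref{Burgen3}: since the scalar part of each $R_j,G_j$ is a genuine permutation of $\{t_i,q_i\}$, those permutation parts automatically satisfy the $\Sigma_n$-relations, and one is left to match the ``vector parts'', which is exactly what Proposition~\ref{Burgen3} delivers. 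The only genuine obstacle is bookkeeping in this reduction: a scalar $q_k$ produced at one step is carried by the variable-permuting (semilinear) part of the next map, so one must keep careful track of how the scalars move; unlike in Theorem~\ref{vautrepr}, $R_j,G_j$ are not determined by their values on a generating set, so the relations cannot simply be read off from the switch identities. Once this is done, $R_1,\dots,R_{n-1},G_1,\dots,G_{n-1}$ satisfy all defining relations of $VB_n$, so $\varphi_{3B}$ is a representation.

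For the second assertion I would use the homomorphism $\iota:VB_n\to\Sigma_n$, $\sigma_i,\rho_i\mapsto\tau_i$, whose kernel is $VP_n$. By construction, the induced permutation of the indexed variables $t_1,\dots,t_n,q_1,\dots,q_n$ attached to $\varphi_{3B}(\sigma_i)$ and to $\varphi_{3B}(\rho_i)$ is $\tau_i$; sending an element of $\varphi_{3B}(VB_n)$ to this permutation is a homomorphism onto $\Sigma_n$, and its composition with $\varphi_{3B}$ agrees with $\iota$ on the generators, hence equals $\iota$. Therefore, for $w\in VP_n=\ker\iota$, the automorphism $\varphi_{3B}(w)$ fixes every $t_i$ and every $q_i$, so it acts $K$-linearly on $M=K^n$; that is, $\varphi_{3B}(w)\in\mathrm{GL}_n(K)$. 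Consequently the restriction of $\varphi_{3B}$ to $VP_n$ is a linear representation $\varphi_{3B}:VP_n\to\mathrm{GL}_n(K)$, as claimed.
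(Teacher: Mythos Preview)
Your proposal is correct and follows essentially the same approach as the paper: the paper's proof of this theorem is the single sentence ``The proof is the same as the proof of the first part of Theorem~\ref{glin},'' and the first part of Theorem~\ref{glin} is itself handled by ``(a bit massive but not difficult) direct calculations in a similar to Theorem~\ref{autrepr} way.'' What you have written is precisely this direct-calculation scheme, spelled out in more detail than the paper itself supplies, together with the same observation (mirroring Theorem~\ref{glin}) that elements of the kernel of $\iota$ act trivially on the scalar variables and hence $K$-linearly on $M$.
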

\begin{proof}The proof is the same as the proof of the first part of Theorem~\ref{glin}.
\end{proof}
If in formulas~(\ref{vtrylin}) we put $q_1=q_2=\dots=q_n=0$, then the map $R_j$ from equality~(\ref{vtrylin}) becomes the same as the map $R_j$ from equality (\ref{trylin}), therefore the representation $\varphi_{3B}:VB_n\to{\rm Aut}\left(\left(\mathbb{Z}[t_1^{\pm1},t_2^{\pm1},\dots,t_n^{\pm1},q_1^{\pm1},q_2^{\pm1},\dots,q_n^{\pm1}]\right)^n\right)$ extends the representation $\varphi_{2B}:B_n\to{\rm Aut}\left(\left(\mathbb{Z}[t_1^{\pm1},t_2^{\pm1},\dots,t_n^{\pm1}]\right)^n\right)$. Hence, due to Theorem~\ref{glin}, the induced representation
$$\varphi_{3B}:VP_n\to{\rm GL}_n\left(\mathbb{Z}[t_1^{\pm1},t_2^{\pm1},\dots,t_n^{\pm1},q_1^{\pm1},q_2^{\pm1},\dots,q_n^{\pm1}]\right)$$
extends the Gassner representation $\varphi_G$. It is not known if $\varphi_G$ is faithful or not. In the following proposition we prove that the representation $\varphi_{3B}$ which extends the Gassner representation has non-trivial kernel.
\begin{prop} For $n\geq 3$ the representation
$$\varphi_{3B}:VP_n\to{\rm GL}_n\left(\mathbb{Z}[t_1^{\pm1},t_2^{\pm1},\dots,t_n^{\pm1},q_1^{\pm1},q_2^{\pm1},\dots,q_n^{\pm1}]\right)$$
has non-trivial kernel.
\end{prop}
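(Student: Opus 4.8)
The plan is to exhibit a single non-trivial element of $VP_n$ lying in $\ker\varphi_{3B}$. By Theorem~\ref{vvvglin}, $\varphi_{3B}$ restricts to a $K$-linear representation $VP_n\to{\rm GL}_n(K)$ (a pure virtual braid fixes every $t_i$ and $q_i$), so it suffices to produce $w\in VP_n$, $w\neq 1$, with $\varphi_{3B}(w)=I_n$. I would first reduce to $n=3$: the embedding $VB_3\hookrightarrow VB_n$ on the first three strands restricts to an embedding $VP_3\hookrightarrow VP_n$, and under $\varphi_{3B}$ the subgroup $VP_3$ acts only on $e_1,e_2,e_3$ and on the variables $t_1,t_2,t_3,q_1,q_2,q_3$, fixing everything else; this action is the $n=3$ representation after a harmless extension of scalars. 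Hence $\ker\varphi_{3B}^{(3)}\subseteq\ker\varphi_{3B}^{(n)}$, and it is enough to treat $n=3$.

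For $n=3$ the key structural input is that, \emph{modulo the ideal} $I=(t_1-1,t_2-1,t_3-1)$ \emph{of $K$}, the matrix $\varphi_{3B}(\lambda_{ij})$ is \emph{diagonal} for every standard (Bardakov) generator $\lambda_{ij}$ ($1\le i\neq j\le 3$) of $VP_3$; for instance $\varphi_{3B}(\lambda_{12})\equiv\mathrm{diag}(q_2,q_1^{-1},1)\pmod I$. One checks this by writing each $\lambda_{ij}$ as a word in $\sigma_1,\sigma_2,\rho_1,\rho_2$ and composing the maps $R_j,G_j$ of~(\ref{vtrylin}). Thus $\varphi_{3B}(VP_3)$ is abelian mod $I$, so $\varphi_{3B}$ sends the derived series of $VP_3$ into the $I$-adic congruence filtration $\{K_m\}$ of ${\rm GL}_3(K)$, where $K_m=\{A:A\equiv I_3\bmod I^m\}$. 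Since the layers $K_m/K_{m+1}$ are abelian and $\bigcap_m K_m=\{I_3\}$ by the Krull intersection theorem in the Noetherian domain $K$, the image $\varphi_{3B}(VP_3)$ is residually solvable and $\ker\varphi_{3B}\supseteq\bigcap_k VP_3^{(k)}$. So one clean way to finish is to invoke the fact that $VP_3$ is not residually solvable (its derived series has non-trivial intersection); then $\ker\varphi_{3B}\neq 1$ at once.

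For a self-contained argument one exhibits $w$ directly. The reduction mod $I$ pins down where to look: $\ker\varphi_{3B}$ lies inside $\ker(\varphi_{3B}\bmod I)$, i.e. inside the preimage of a proper sublattice of $VP_3^{\mathrm{ab}}=\mathbb{Z}^{6}$. I would write out the six $3\times 3$ matrices $\varphi_{3B}(\lambda_{ij})$ in full and search among words in them and their inverses for one with $\varphi_{3B}(w)=I_3$; note that $\varphi_{3B}$ does \emph{not} factor through the welded braid group — a short computation gives $\varphi_{3B}(\sigma_1\sigma_2\rho_1)\neq\varphi_{3B}(\rho_2\sigma_1\sigma_2)$ — so $w$ cannot be a forbidden-move relator and a genuine matrix computation is unavoidable. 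Having produced such a $w$, one certifies $w\neq 1$ in $VP_3$: either it already has non-zero image in $VP_3^{\mathrm{ab}}$, or, if it lies in $[VP_3,VP_3]$, one checks that it survives in a deeper quotient — the metabelianization $VP_3/VP_3''$, or $\gamma_2(VP_3)/\gamma_3(VP_3)$ computed from the presentation of $VP_3$, or a non-abelian (even free) quotient of $VP_3$.

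I expect the main obstacle to be precisely this last point: either establishing that $VP_3$ fails to be residually solvable, or locating the explicit relator $w$ and proving it non-trivial in $VP_3$. The linear algebra itself (composing the $R_j$ and $G_j$, reducing mod $I$, forming commutators) is routine though lengthy; the real content is on the group-theoretic side, namely separating the relatively intricate group $VP_3$ from its residually solvable linear image $\varphi_{3B}(VP_3)$.
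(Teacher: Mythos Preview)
Your reduction to $n=3$ is the same as the paper's, and the idea of comparing the solvability properties of the image with those of the source is exactly right. But you stop one step short of the decisive observation, and that leaves a real gap.

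You only record that $\varphi_{3B}(\lambda_{ij})$ is diagonal modulo $I=(t_1-1,t_2-1,t_3-1)$, and from this deduce that the image is \emph{residually} solvable via the $I$-adic filtration. That forces you to prove that $VP_3$ is not residually solvable, a fact you invoke but do not establish, and which is not obvious (your fallback of producing an explicit $w$ and certifying $w\neq1$ is left entirely open). This is the gap you yourself flag, and it is genuine.

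The paper avoids it by a sharper linear-algebra observation. Restrict further to the subgroup $VP_3^{+}=\langle\lambda_{1,2},\lambda_{1,3},\lambda_{2,3}\rangle$ (three generators, one defining relation $\lambda_{1,2}\lambda_{1,3}\lambda_{2,3}=\lambda_{2,3}\lambda_{1,3}\lambda_{1,2}$). A direct computation shows that each $\varphi_{3B}(\lambda_{1,2})$, $\varphi_{3B}(\lambda_{1,3})$, $\varphi_{3B}(\lambda_{2,3})$ is \emph{upper triangular over $K$ itself}, not merely diagonal mod $I$. Hence $\varphi_{3B}(VP_3^{+})$ sits inside the solvable group of upper triangular matrices and is therefore \emph{solvable}, not just residually solvable. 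On the other side, since $VP_3^{+}$ is a one-relator group on three generators, the Magnus Freiheitssatz gives that any two of $\lambda_{1,2},\lambda_{1,3},\lambda_{2,3}$ generate a free non-abelian subgroup. A non-abelian free group is not solvable, so its image under $\varphi_{3B}$ cannot be faithful, and the kernel is non-trivial. No question of residual solvability of $VP_3$, no explicit relator, and no delicate certification of $w\neq1$ is needed.

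In short: strengthen ``diagonal mod $I$'' to ``upper triangular over $K$'' for the three generators of $VP_3^{+}$, and replace ``$VP_3$ is not residually solvable'' by ``$VP_3^{+}$ contains a non-abelian free group'' (Freiheitssatz). That closes the argument.
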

\begin{proof}It is clear that it is enough to prove that the representation $$\varphi_{3B}:VP_3\to{\rm GL}_n\left(\mathbb{Z}[t_1^{\pm1},t_2^{\pm1},t_3^{\pm1},q_1^{\pm1},q_2^{\pm1},q_3^{\pm1}]\right)$$
has non-trivial kernel, since the elements from the kernel for $n=3$  belong to the kernel for arbitrary $n$. The group $VP_3$ has a subgroup $VP_3^{+}$ which has three generators
\begin{align}
\notag \lambda_{1,2} = \rho_1 \sigma_1^{-1},&&\lambda_{1,3} = \rho_2 \lambda_{1,2} \rho_2,&&\lambda_{2,3} = \rho_2 \sigma_2^{-1},
\end{align}
and one defining relation
$$\lambda_{1,2}\lambda_{1,3}\lambda_{2,3}=\lambda_{2,3}\lambda_{1,3}\lambda_{1,2}$$
(see, for example, \cite{Bar-0}). Using direct calculations it is easy to see that
\begin{align}
\notag R_1^{-1}:\begin{cases}
  e_{1} \mapsto e_2,  \\
  e_2 \mapsto t_2^{-1} e_{1} + t_2^{-1} (t_1 - 1) e_2,\\
t_1 \mapsto  t_{2}, \\
t_{2} \mapsto t_1,  \\
q_1 \mapsto  q_{2}, \\
q_{2} \mapsto q_1,  \\
\end{cases}&&
G_1:\begin{cases}
  e_{1} \mapsto q_1 e_2,\\
  e_2 \mapsto q_2^{-1} e_{1}, \\
t_1 \mapsto  t_{2}, \\
t_{2} \mapsto t_1,  \\
q_1 \mapsto  q_{2}, \\
q_{2} \mapsto q_1,  \\
\end{cases}\\
\notag R_2^{-1}:\begin{cases}
  e_{2} \mapsto e_3,  \\
  e_3 \mapsto t_3^{-1} e_{2} + t_3^{-1} (t_2 - 1) e_3, \\
t_2 \mapsto  t_{3}, \\
t_{3} \mapsto t_2,  \\
q_2 \mapsto  q_{3}, \\
q_{3} \mapsto q_2,  \\
\end{cases}&&
G_2 :\begin{cases}
  e_{2} \mapsto q_2 e_3,  \\
  e_3 \mapsto q_3^{-1} e_2, \\
t_2 \mapsto  t_{3}, \\
t_{3} \mapsto t_2,  \\
q_2 \mapsto  q_{3}, \\
q_{3} \mapsto q_2,  \\
\end{cases}
\end{align}
From these equalities and direct calculations follows that
\begin{align}
\label{l12}
\varphi_{3B}(\lambda_{1,2})=G_1R_1^{-1}&:
\begin{cases}
  e_1 \mapsto q_2( t_2^{-1} e_{1} + t_2^{-1} (t_1 - 1) e_2), \\
  e_{2} \mapsto q_1^{-1} e_2,  \\
  e_{3} \mapsto e_3,  \\
\end{cases}\\
\label{l13}
\varphi_{3B}(\lambda_{1,3})=G_2\varphi_{3B}(\lambda_{1,2})G_2&:
\begin{cases}
  e_1 \mapsto q_3( t_3^{-1} e_{1} + t_3^{-1} (t_1 - 1) q_2 e_3), \\
  e_{2} \mapsto e_2,  \\
  e_{3} \mapsto q_1^{-1} e_3,  \\
\end{cases}\\
\label{l23}
\varphi_{3B}(\lambda_{2,3})=G_2R_2^{-1}&:
\begin{cases}
  e_{1} \mapsto e_1,  \\
  e_2 \mapsto q_3( t_3^{-1} e_{2} + t_3^{-1} (t_2 - 1) e_3), \\
  e_{3} \mapsto q_2^{-1} e_3.  \\
\end{cases}
\end{align}
From equalities (\ref{l12}), (\ref{l13}), (\ref{l23}) we see that the matrices of the linear transformations $\varphi_{3B}(\lambda_{1,2})$, $\varphi_{3B}(\lambda_{1,3})$, $\varphi_{3B}(\lambda_{2,3})$ are upper triangular, therefore the group $\varphi_{3B}(VP_3^+)$ is solvable. However, $VP_3^{+}$ has three generators and one relation, therefore by Magnus theorem \cite[Section~4.4]{MKS} any two elements from the set $\{\lambda_{1,2},  \lambda_{1,3},\lambda_{2,3}\}$
generate non-abelian free group. Therefore the induced representation
$$\varphi_{3B}:VP_3^+\to {\rm GL}_n(\mathbb{Z}[t_1^{\pm1},t_2^{\pm1},t_3^{\pm1},q_1^{\pm1},q_2^{\pm1},q_3^{\pm1}])$$
has a non-trivial kernel
\end{proof}

{\small

\medskip

\medskip

\noindent
Valeriy Bardakov\\
Tomsk State University, pr. Lenina 36, 634050 Tomsk, Russia,\\
Sobolev Institute of Mathematics, Acad. Koptyug avenue 4, 630090 Novosibirsk, Russia,\\
Novosibirsk State University, Pirogova 1, 630090 Novosibirsk, Russia,\\
Novosibirsk State Agricultural University, Dobrolyubova 160, 630039 Novosibirsk, Russia,\\
bardakov@math.nsc.ru
~\\
~\\
Timur Nasybullov\\
KU Leuven KULAK, Etienne Sabbelaan 53, 8500 Kortrijk, Belgium,\\
timur.nasybullov@mail.ru
}

\end{document}